\numberwithin{equation}{section}
\theoremstyle{plain}
\newtheorem{thm}{Theorem}[section]
\newtheorem{proposition}[thm]{Proposition}
\newtheorem{cor}[thm]{Corollary}
\newtheorem{lemma}[thm]{Lemma}
\newtheorem{remark}{Remark}
\newtheoremstyle{noparens}%
{}{}%
{\itshape}{}%
{\bfseries}{.}%
{ }%
{\thmname{#1}\thmnumber{ #2}\mdseries\thmnote{ #3}}
\theoremstyle{noparens}
\theoremstyle{definition}
\theoremstyle{remark}
\newcommand{\bfrho}{\boldsymbol{\rho}}
\newcommand{\Rmnum}[1]{\expandafter\@slowromancap\romannumeral #1@}
\def\@eqnnum{{\normalfont \normalcolor \hfill \@eqnnumformat{(\theequation)}}}
\patchcmd{\section}{\scshape}{}{}{}
\patchcmd{\subsection}{\bfseries\scshape}{\bfseries}{}{}
\patchcmd{\subsubsection}{\bfseries\scshape}{\bfseries}{}{}
\def\@settitle{\begin{center}\normalfont\LARGE\bfseries \@title\end{center}}
\begin{document}
	
\title{BMO on Weighted Bergman Spaces over Tubular Domains}

	\author[Jiaqing Ding, Haichou Li, Zhiyuan Fu, Yanhui Zhang]{ Jiaqing Ding$^{1}$, Haichou Li$^{2,*}$, Zhiyuan Fu$^3$, YanHui Zhang$^4$}
	
	\address{$1.$ College of Mathematics and informatics, South China Agricultural University, Guangzhou, 510640, China}
	\address{$2.$ College of Mathematics and informatics, South China Agricultural University, Guangzhou, 510640, China}
		\address{$3.$ College of Mathematics and Statistics, Beijing Technology and Business University, Beijing, 100048, China}
	\address{$4.$ College of Mathematics and Statistics, Beijing Technology and Business University, Beijing, 100048, China}
	\email{djq123@stu.scau.edu.cn (J. Ding)\:; hcl2016@scau.edu.cn (H. Li)\: ; 435010980@qq.com(Z. Fu) \: ; zhangyanhui@th.btbu.edu.cn(Y. Zhang)}.

	\keywords{BMO, Bergman space, Bloch space, differential operator, tubular domain}
	\thanks{$^*$Corresponding author. The research of the second author was supported by NSFC (Grant No. 12326407 and 12071155); the fourth author was supported by NSFC (No.11971042).}
	
	\begin{abstract}
		
		In this paper, we characterize Bounded Mean Oscillation  (BMO)  and establish their  connection with Hankel operators on weighted Bergman spaces over tubular domains. By utilizing the space BMO, we provide a new characterization of Bloch spaces on tubular domains. Next, we define a modified projection operator and prove its boundedness. Furthermore, we introduce differential operators and demonstrate that these operators belong to Lebesgue spaces on tubular domains. Finally, we establish an integral representation for Bergman functions  using these  differential operators.
	\end{abstract}
	\maketitle
 	
\section{Introduction}

 BMO plays a critical role in harmonic analysis and has been extensively studied, with significant contributions from researchers such as Garnett \cite{JB} and Fefferman-Stein \cite{CE}. Zhu \cite{Zhu1992BMOAH} studied BMO in the unit ball, providing an equivalent characterization that has influenced later research. Békollé et al. \cite{BEKOLLE1990310} extended the investigation to BMO on bounded symmetric domains, while  Pau et al. \cite{Pau2016} considered the weighted BMO within the unit ball.

The Bloch space is characterized by its invariance under biholomorphic transformations and Möbius maps, highlighting its significance in complex analysis.
And its properties have been studied by many authors (\cite{JM1},\cite{JM2},\cite{SA},\cite{Chu}, \cite{Zhu05}, \cite{Tim80I}, \cite{Bek} and \cite{Si}), with extensions  from the unit disk to the unit ball, and further to the unbounded domain of the Siegel upper half-plane. For the cases of  the unit disk and unit ball, many foundational results have established equivalences with $L^\infty$ spaces. Further studies in the Siegel upper half-plane have explored analogous characterizations, although these come with limitations for specific range of $p$. In this paper, we mainly consider another unbounded domains, such as tubular domain $T_B$. Following the approach taken in the cases of the unit ball and Siegel upper half-plane, we utilize invariant gradients to define the Bloch space $\mathcal{B}$ over $T_B$.

The main ideas of this paper are inspired by the works of Liu\cite{Liu} and Zhu \cite{Zhu1992BMOAH},  with Zhu’s work in particular investigating key results concerning BMO and Bloch space within the unit ball. In contrast, our work extends these results to tubular domains, providing a broader framework for understanding the relationship between these spaces and their applications. While Si\cite{Si} has explored similar results to the Siegel upper half-plane. 

In this paper, we obtain four main results. First, we establish a characterization of the $BMO^p_\alpha$ on $T_B$ by the bounded Hankel operators on weighted Bergman spaces. Utilizing $BMO^p_\alpha$ , we derive a new characterization of Bloch space. Additionally, we demonstrate that the Bergman kernel does not belong to ${L}^{1}_\alpha(T_{B})$ within the tubular domains, thereby introducing a modified kernel function  $\widetilde{K}_\alpha$ and a modified projection operator $\widetilde{P}_\alpha$. We also prove that the modified projection from $L^{\infty }(T_{B})$ to $\widetilde{\mathcal{B}}$ is bounded and provide a kind of integral representation for Bergman functions.

{\bfseries Theorem A}\label{thm:mainA}
For $r>0$, $1\leq p<\infty $, and $p(\alpha+1)>\lambda+1$, let $f\in L^p_\lambda(T_B)$, the following conditions are equivalent:
\begin{enumerate}[(a)]
	\item $f\in BMO^p_r$.
	\item $f=f_1+f_2$ with $f_1\in BO$ and $f_2\in BA^p$.
	\item $H^{(\alpha)}_f$ and $H^{(\alpha)}_{\overline f}$ are both bounded on $A^p_\lambda(T_B)$.
	
\end{enumerate}

Following the characterization of $BMO^p_r$ in Theorem A, we now investigate the relationship between BMO and the Bloch spaces defined on tubular domains.

{\bfseries Theorem B}\label{thm:mainB}
	Let $H(T_{B})$ be the space of holomorphic functions in $T_{B}$. For any $r>0$, we have 
	\[
	\mathcal{B}=H(T_{B})\cap BMO^p_r ,
	\]
	and 
	\[
	\mathcal{B}_0=H(T_{B})\cap VMO^p_r.
	\]
	Moreover, $\|f\|_{\mathcal{B}}$ and $\|f\|_{BMO^p_r}$ are equivalent.
	
 Next, we will consider a new space 
$$\widetilde{\mathcal{B}}:=\left \{f\in \mathcal{B}:f(\mathbf{i})=0 \right \},$$
where $\mathbf{i}:=(0',i)$, and $0'=(0,\cdots,0)\in \mathbb{R}^{n-1}$.

The Bloch space on the unit ball is known to be equivalent to $L^{\infty}(\mathbb{B}_n)$ when considered under the projection operator $P_\alpha$. We aim to extend this relationship to tubular domains. However, due to Forreli-Rudin type estimates showing that the Bergman kernel does not belong to ${L}^{1}_\alpha(T_{B})$ on tubular domains, it becomes necessary to define a modified kernel function, $\widetilde{K_\alpha}$, and a corresponding modified projection operator,  $\widetilde{P_\alpha}$, from ${L}^{\infty }(T_B)$ to  $\widetilde{\mathcal{B}}$,  as described below:
$$\widetilde{K_\alpha}:=K_\alpha(z,w)-K_\alpha(\mathbf{i},w),$$
$$\widetilde{P_\alpha}f(z):=\int_{T_B}\widetilde{K_\alpha}(z,w)f(w)dV_\alpha(w), $$
where $\mathbf{i}:=(0',i)$.

{\bfseries Theorem C}\label{thm:mainC}
	$\widetilde{P}_\alpha$ is a bounded projection from $L^{\infty }(T_{B})$ to $\widetilde{\mathcal{B}}$.

For $\alpha>-1$, we define the integral operator  $\mathcal{T} _\alpha $ as follows:
\[
\mathcal{T} _\alpha f(z)=\frac{\varGamma \left( n+1+\alpha \right)}{2^{n+1}\pi ^n\varGamma (\alpha+1)}\int\limits_{T_{B}} \frac{\bfrho(w)^{\alpha}}{\bfrho(z,w)^{n+1+\alpha}} f(w)dV(w),
\]
where $dV(z)$ denotes the Lebesgue measure on $\mathbb{C}^n$.

{\bfseries Theorem D}\label{thm:mainD}
	Suppose that $1\leq p < \infty$, $\lambda>-1$ and $\alpha \in \mathbb{R}$ satisfy
	\[
	\begin{cases}
		\alpha > \frac {\lambda+1}{p}-1,& 1<p<\infty,\\
		\alpha \geq \lambda,& p=1.
	\end{cases}
	\]
	If $f\in A_\lambda^p(T_{B})$ then
	\begin{equation*}\label{eqn:intrepn1}
		f=  \frac{(2i)^N\Gamma(1+\alpha)}{\Gamma(1+\alpha+N)}\mathcal{T} _\alpha  (\bfrho^N \mathcal{L}_n^N f)
	\end{equation*}
	for any $N\in\mathbb{N}_0$.

The structure of this paper is organized as follows: In Section 2, we introduce key terminology and preliminary results. Section 3 is dedicated to the first two theorems, namely Theorem A and Theorem B.  In Section 4, we focus on establishing the last two theorems, Theorem C and Theorem D.

Additionally, the notation $A \lesssim B$ means that there is a positive constant $C$ such that $A \leq C B$, and the notation $A \simeq B$ means that both $A \lesssim B$ and $B \lesssim A$ hold.

\section{Preliminaries and Auxiliary Results}
\subsection{Preliminaries}
	  Let $\mathbb{C}^n$ be the $n$-dimensional complex Euclidean space. For any two points $z=\left( z_1,\cdots ,z_n \right) $ and $\bar{w}=\left( \bar{w}_1,\cdots ,\bar{w}_n \right) $ in $\mathbb{C}^n,$ we write
	  \[z\cdot \bar{w}:=z_1\bar{w}_1+\cdots +z_n\bar{w}_n,\]  \[|z'|^2:=|z_{1}|^{2}+|z_{2}|^{2}+\cdots +|z_{n-1}|^{2},\] 
	  where $z'=(z_1,\cdots, z_{n-1})$.
	  
 The set \[T_B=\left\{ z=x+iy,x\in \mathbb{R}^n,y\in B\subset \mathbb{R}^n \right\} \]is a tubular domain in an n-dimensional complex space $\mathbb{C}^n$, where
	\[B:=\left\{ y=(y',y_n): \left|{y'}\right|^2<y_n, y'=\left( y_1,y_2,\cdots ,y_{n-1} \right)\in \mathbb{R}^{n-1}\right\}.\]
	
	We define the space $L^{p}_\alpha\left( T_B \right)$, which consists of all Lebesgue measurable functions $f$ on $T_B$, with the norm given by:
	 \[\lVert f \rVert _{L^{p}_\alpha\left( T_B \right)}=\left\{ \int_{T_B}{\left| f\left( z \right) \right|^p dV_\alpha\left( z \right)} \right\} ^{\frac{1}{p}}<\infty, \]
	  where  $dV_\alpha(z)=( y_n-\left| y' \right|^2)^\alpha dV(z)$, $\alpha>-1$, and $dV(z)$ denotes the Lebesgue measure on $\mathbb{C}^n$. 
	  
	  Let $A^{p}_\alpha \left( T_B \right)$ denote the weighted Bergman space on $T_B.$ Since the valuation functional is bounded, so $A^{p}_\alpha\left( T_B \right)$ is the closed subspace of $L^{p}_\alpha\left( T_B \right)$. It is easy to know that,  when $1\leqslant p<\infty $, the space $A^{p}_\alpha\left( T_B \right)$ is a Banach space with the norm $\lVert \cdot \rVert _{p}.$ Specially, when $p=2,$ $A^{2}_\alpha\left( T_B \right)$ is a Hilbert space. 
	  
	  The orthogonal projection from $L^{2}_\alpha\left( T_B \right)$ to $A^{2}_\alpha\left( T_B \right)$ is the following integral operator:\[P_\alpha f\left( z \right) =\int_{T_B}{K_\alpha\left( z,w \right) f\left( w \right) dV_\alpha\left( w \right)},\] with the Bergman kernel (\cite{LD} )
	  \[
	  K_\alpha\left( z,w \right) =\frac{2^{n+1+2\alpha}\varGamma \left( n+1+\alpha \right)}{ \varGamma \left( 1+\alpha \right)\pi ^n}\left( \left( z'-\overline{w'} \right) ^2-2i\left( z_n-\overline{w}_n \right) \right) ^{-n -1-\alpha}.
  \] 
	  
	   For convenience, we introduce the following notation:
	  \[\bfrho \left( z,w \right) =\frac{1}{4}\left( \left( z'-\overline{w'} \right) ^2-2i\left( z_n-\overline{w_n} \right) \right)\] and let $\bfrho \left( z \right) :=\bfrho \left( z,z \right) =y_n-|y'|^2.$ 
	  Then the  Bergman kernel of $T_B$ is
	  \[K_\alpha\left( z,w \right) =\frac{\varGamma \left( n+1+\alpha \right)}{2^{n+1}\pi ^n\varGamma (\alpha+1)\bfrho \left( z,w \right) ^{n +1+\alpha}}.\]
	
	Moreover, for $\alpha$, $\lambda>-1$, let $1 \leq  p < \infty $, $P_\alpha$ can be extend to $L^p_\lambda(T_B)$, and $P_\alpha$ is a bounded projection from $L^p_\lambda(T_B)$ onto $A^p_\lambda(T_B)$ if and only if $p(\alpha+1)>\lambda+1$, see \cite[Lemma 3.4]{Li}.
	
For $f\in L^p_\alpha(T_B)$, let $M_f$ denote the multiplication operator induced $f$, the Hankel operator with symbol $f$ is denote by
\[
H^\alpha_f=(\mathbf{I}-P_\alpha)M_fP_\alpha,
\]
 where $\mathbf{I}$ is the identity operator. 
 
 Let  $bT_B:=\left\{ z\in \mathbb{C}^n\,\,: \bfrho \left( z \right) =0 \right\}$ denote the boundary of $T_B.$ Then $\widehat{T_B}:=T_B\cup bT_B\cup \left\{ \infty \right\}$ is the one-point compactification of $T_B.$ Also, let $\partial \widehat{T_B}:=bT_B\cup \left\{ \infty \right\} .$ Thus, $z\rightarrow \partial \widehat{T_B}$ 
means $\bfrho \left( z \right) \rightarrow 0$ or $\left| z \right|\rightarrow \infty .$ 

	Let $\mathcal{M}_+$ be the set of all positive Borel measure $\mu$ such that \[\int_{T_B}{\frac{d\mu \left( z \right)}{\left| \bfrho \left( z,\mathbf{i} \right) \right|^t}}<\infty ,\] for some $t>0$. 

   And let the complex matrix 
	\begin{equation*}
		\mathbf{B}(z):=(b_{ij}(z))_{1\leq i,j\leq n}=\frac{1}{n+1+\alpha}\left ( \frac{\partial ^2}{\partial \bar{z}_i\partial z_j}\ln{K_\alpha(z,z)}\right )_{1\leq i,j\leq n}
	\end{equation*}
	be the Bergman matrix of $T_{B}.$ For a $C^1$ curve $\gamma \,\,: \left[ 0,1 \right] \rightarrow T_B$ , we define 
	\[
	l(\gamma )=\int_{0}^{1}\left \langle \mathbf{B}(\gamma (t))\gamma '(t),\gamma '(t)\right \rangle dt,
	\]
	and call $\beta $ the Bergman metric on $T_{B}$, where 
	$$\beta (z,w)=\inf \{l(\gamma ):\gamma (0)=1,\gamma (1)=w\}.$$
	 Let $D\left( z,r \right) $ denote the Bergman metric ball at $z$ with radius $r$. Thus \[D(z,r)=\left\{w \in T_B:  \beta(z,w)<r\right\}.\]

	Let $|D(z,r)|=V_\alpha(D(z,r))$. For a locally integrable function $f$ on $T_B$, we define a function $\widehat{f}_r$ on $T_B$ as follows:
	\[
	\widehat{f}_r(z)=\frac{1}{|D(z,r)|} \int_{D(z,r)} f(w) dV_\alpha(w),
	\]
	and $\widehat{f}_r(z)$ is the integral mean of $f$ over $D(z,r)$.
	For fixed $r>0$ and $f\in L^p_\alpha(T_{B})$, $1\leq p<\infty$, we define the mean oscillation of $f$ at $z$ in the Bergman metric as follows: 
	\[
	MO_r(f)(z)= \left( \frac{1}{|D(z,r)|} \int_{D(z,r)} |f(w)-\widehat{f}_r(z)|^p dV_\alpha(w) \right)^{1/p}.
	\]
 The space $BMO^p_r$ consists of those functions $f\in L^p_\alpha(T_{B})$ such that
	\[
	\|f\|_{BMO^p_r}= \sup\left\{MO_r(f)(z):z\in T_{B}\right\}<\infty.
	\]
We denote by $C_0\left( T_B \right) $ the space of complex-valued continuous functions $f$ on $T_B$ such that $f\left( z \right) \rightarrow 0$ as $z\rightarrow \partial \widehat{T_B}.$
We say that $f\in VMO_r$ if $MO_r(f)\in C_0(T_{B})$.
	
	For a continuous function $f$ on $T_B$, any $r>0$, let 
	\[
	\omega _r(f)(z)=\sup\{\left | f(z)-f(w)\right |:w\in D(z,r)\}.
	\]
	The function $\omega _r(f)(z)$ is called the oscillation of $f$ at $z$ in the Bergman metric. Let $BO_r$ denote the space of continuous functions $f$ such that 
	\[
	\left \| f\right \|_{BO_r}=\sup\{\omega _r(f)(z):z\in T_B\}<\infty .
	\]
	
		Recall that the Berezin transform over $T_{B}$ is denoted by
	\[
	B_\alpha{f}(z) = \int_{T_{B}} f(w)|k^\alpha_z(w)|^2 dV_\alpha(w),\quad z\in T_{B},
	\]
	where
	\[
	k^\alpha_z(w) = K_\alpha(w,z)/\sqrt{K_\alpha(z,z)},\quad w\in T_{B}.
	\]
	Actually, the Berezin transform is bounded on $L^p_\alpha(T_{B})$ for $1\leq p<\infty$, see \cite{LD}.

Let $BA^p_r$ denote the spaces of all functions on $T_B$ with the property that 
$\widehat{|f|^p_r}(z)\in L^\infty (T_B)$.
From \cite[Theorem 6.1]{Li}, we know that $\widehat{|f|^p_r}(z)$ is bounded if and only if $B_\alpha(\left | f\right |^p)(z)$ is bounded, so which means $BA^p_r$ is independent of $r$, simply write $BA^p$ for $BA^p_r$.

\subsection{Cayley transform and automorphism}

We will use the transform $\varPhi :\mathbb{B}_{n}\rightarrow T_{B}$ as follows, see \cite{LD}
\begin{equation}\label{cayley}
\varPhi(z)=\left ( \frac{\sqrt{2}z'}{1+z_{n}}, i\frac{1-z_n}{1+z_n}-i\frac{z' \cdot z'}{(1+z_n)^2} \right ) ,\ \ z\in \mathbb{B}_n .
\end{equation}
And it is not hard to calculate that 
$$\varPhi^{-1}(w)=\left ( \frac{2iw'}{i+w_{n}+\frac{i}{2}w'\cdot w'}, \frac{i-w_n-\frac{i}{2}w'\cdot w'}{i+w_{n}+\frac{i}{2}w'\cdot w'} \right ) ,\ \ w\in T_{B} .$$

For each fixed $z\in T_B$, we give the holomorphic automorphism of $T_{B}$ as follows:
$$h_z\left( u \right) :=\left( u'-z',u_n-\text{Re}z_n-iu'\overline{z'}+\frac{i\left| z' \right|^2}{2}+\frac{i\overline{z'}\cdot \overline{z'}}{4}+\frac{iu'\cdot z'}{4} \right). $$
Obviously, $h_z(u)$ is a holomorphic automorphism of $T_{B}.$ Hence, the mapping $\sigma _{z}:=\delta _{\rho (z)^{-\frac{1}{2}}}\circ h_{z}$ is a holomorphic automorphism of $T_{B}.$
Simple calculations show that $\sigma _{z}(z)=\mathbf{i}:=(0',i)$ and 
\begin{equation}\label{eqn:jacobian}
	(J_{\mathbb{C}}\sigma_{z})(u)=\bfrho (z)^{-\frac{n+1}{2}},
\end{equation}
where $(J_{\mathbb{C}}\sigma_{z})(u)$ stands for the complex Jacobian of $\sigma_{z}$ at $u.$

Through the Cayley transform, a class of M\"obius transformations of $T_{B}$ induced by $\varphi_{\xi}$ is given by
\begin{equation}\label{transform}
	\tau_z:=\Phi\circ \varphi_{\Phi^{-1}(z)}\circ\Phi^{-1}.
\end{equation}
 Obviously $\Phi(0)=\mathbf{i}$, $\tau_z(z)=\mathbf{i}$. 

To examine the properties of holomorphic functions on tubular domains, it is crucial to understand the behavior of holomorphic mappings from the unit ball to these domains. The following key lemma, adapted from \cite{LD}, highlights essential properties of such mappings: 
	\begin{lemma}\label{auto}
		The following properties hold for holomorphic mapping $\varPhi$ from $\mathbb{B}_n$ to $T_B$:	
	\begin{enumerate}
		\item[(i)] The real Jacobian of $\varPhi$ at $z\in T_{B}$ is 
		\begin{equation}
			J_{R}(\varPhi(z))=\frac{2^{n+1}}{{\left | 1+z_n\right |}^{2(n+1)}}.
		\end{equation}
		
		\item[(ii)] The real Jacobian of $\varPhi^{-1}$ at $z\in T_{B}$ is 
		\begin{equation}\label{eqn:jacobian Phi^-1} 
			(J_{R} \varPhi^{-1})(z)=\frac{1}{4\left| \bfrho (z,\mathbf{i} )\right|^{2(n+1)}} .
		\end{equation}
		
		\item[(iii)]  The identity 
		\begin{equation}
			1-\left \langle \varPhi^{-1}(z),\varPhi^{-1}(w)\right \rangle= \frac{\bfrho (z,w)}{\bfrho (z,\mathbf{i} )\bfrho (\mathbf{i},w)}
		\end{equation}
		holds for all $z,w\in T_{B}$, where $\mathbf{i}=(0',i).$ \\ And moreover, 
		$$1- {\left |\varPhi^{-1}(z)\right |}^2= \frac{\bfrho (z)}{{\left | \bfrho (z,\mathbf{i})\right |}^2}, 1+[\varPhi^{-1}(z)]_{n} = \frac{1}{\bfrho (z,\mathbf{i})}.$$
		
		\item[(iv)] The identity 
		\begin{equation}
			\bfrho (z,w)= \bfrho (\varPhi (\xi ),\varPhi (\eta ))= \frac{1-\left \langle \xi ,\eta \right \rangle}{(1+\xi_{n})(1+\eta_n)}
		\end{equation}
		holds for all $z,w\in T_{B},$ where $\xi =\varPhi^{-1}(z), \eta =\varPhi^{-1}(w).$
		\item[(v)] 	For any $z\in T_B$, $\alpha>-1$ and $r>0$ we have
		\begin{equation}\label{D(z,r)}
			V_\alpha\left( D\left( z,r \right) \right) \simeq \bfrho \left( z \right) ^{n+1+\alpha}.
		\end{equation}
		
	\end{enumerate}
\end{lemma}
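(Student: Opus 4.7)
The plan is to establish each of the five items by direct computation from the definitions, with the algebraic identity in (iii) as the engine driving (ii) and (iv). Since $\varPhi$ is holomorphic, (i) reduces to computing $|J_{\mathbb{C}}\varPhi|^{2}$. I would organize the complex Jacobian matrix as a block matrix: the upper-left $(n-1)\times(n-1)$ block $(\partial \varPhi_{k}/\partial z_{j})_{k,j<n}$ is diagonal with entries $\sqrt{2}/(1+z_{n})$, while the last row/column collect the derivatives involving $\varPhi_{n}$ or $z_{n}$. Applying the Schur-complement formula $\det\bigl(\begin{smallmatrix}A & b\\ c^{T} & d\end{smallmatrix}\bigr)=(\det A)(d-c^{T}A^{-1}b)$, I expect the $2i(z'\cdot z')/(1+z_{n})^{3}$ cross term to cancel exactly with the matching term in $\partial\varPhi_{n}/\partial z_{n}$, leaving a clean expression of the form $\mathrm{const}/(1+z_{n})^{n+1}$; taking modulus squared gives (i). Part (ii) then follows from $J_{R}\varPhi^{-1}(z)=1/J_{R}\varPhi(\varPhi^{-1}(z))$, using the identity $1+[\varPhi^{-1}(z)]_{n}=1/\bfrho(z,\mathbf{i})$ from (iii) to rewrite $|1+[\varPhi^{-1}(z)]_{n}|^{2(n+1)}$ in terms of $|\bfrho(z,\mathbf{i})|^{2(n+1)}$.

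For (iii) the strategy is purely algebraic. I would plug the explicit formula for $\varPhi^{-1}$ into $\langle\varPhi^{-1}(z),\varPhi^{-1}(w)\rangle=\sum_{k<n}\varPhi_{k}^{-1}(z)\overline{\varPhi_{k}^{-1}(w)}+\varPhi_{n}^{-1}(z)\overline{\varPhi_{n}^{-1}(w)}$ and clear denominators. The central observation is the identity $i+z_{n}+\tfrac{i}{2}\,z'\cdot z' = 2i\,\bfrho(z,\mathbf{i})$, which I would verify directly from the definition $\bfrho(z,\mathbf{i})=\tfrac{1}{4}\bigl((z')^{2}-2i(z_{n}+i)\bigr)$. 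Using this, both numerator and denominator of $1-\langle\varPhi^{-1}(z),\varPhi^{-1}(w)\rangle$ factor cleanly, yielding $\bfrho(z,w)/[\bfrho(z,\mathbf{i})\bfrho(\mathbf{i},w)]$. The two ``moreover'' formulas then drop out by specialization: setting $w=z$ and using $\bfrho(z,z)=\bfrho(z)$ gives the first, while the second comes from $1+[\varPhi^{-1}(z)]_{n}=2i/(i+z_{n}+\tfrac{i}{2}z'\cdot z')=1/\bfrho(z,\mathbf{i})$. Statement (iv) is then just (iii) rewritten: setting $\xi=\varPhi^{-1}(z),\eta=\varPhi^{-1}(w)$ and solving (iii) for $\bfrho(z,w)$ yields $(1-\langle\xi,\eta\rangle)/[(1+\xi_{n})(1+\eta_{n})]$ after applying the specialization with $\mathbf{i}=\varPhi(0)$.

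Finally, for (v) I would exploit the biholomorphic automorphism $\sigma_{z}$ introduced just above the lemma: it sends $z$ to $\mathbf{i}$ with constant complex Jacobian $\bfrho(z)^{-(n+1)/2}$ by (\ref{eqn:jacobian}), so its real Jacobian is the constant $\bfrho(z)^{-(n+1)}$. Since the Bergman metric is invariant under biholomorphic automorphisms, $D(z,r)=\sigma_{z}^{-1}(D(\mathbf{i},r))$. Changing variables in $V_{\alpha}(D(z,r))=\int_{D(z,r)}\bfrho(w)^{\alpha}\,dV(w)$ converts this into an integral over the fixed compact set $D(\mathbf{i},r)$; on $D(\mathbf{i},r)$ the function $\bfrho$ is bounded above and below by positive constants (since $\bfrho(\mathbf{i})=1$ and the set is compact in $T_{B}$), and tracking the Jacobian factor $\bfrho(z)^{n+1}$ together with the weight transformation yields $V_{\alpha}(D(z,r))\simeq \bfrho(z)^{n+1+\alpha}$.

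The main obstacle I anticipate is (iii): the algebraic manipulation is short in retrospect, but the key identification $i+z_{n}+\tfrac{i}{2}z'\cdot z'=2i\,\bfrho(z,\mathbf{i})$ is easy to bungle with sign and factor-of-two errors, and because (ii), (iv), and ultimately (v) all route through (iii) either directly or via the companion identity $1+[\varPhi^{-1}(z)]_{n}=1/\bfrho(z,\mathbf{i})$, any misstep here propagates. Once (iii) is pinned down, the remaining parts are essentially streamlined bookkeeping.
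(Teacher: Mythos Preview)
Your plan is sound and the computations you outline are correct; in particular the key identity $i+z_n+\tfrac{i}{2}z'\cdot z'=2i\,\bfrho(z,\mathbf{i})$ does hold, and once (iii) is in place your derivations of (ii), (iv), and (v) go through (for (v) your change of variables in fact gives an exact formula $V_\alpha(D(z,r))=\bfrho(z)^{n+1+\alpha}\int_{D(\mathbf{i},r)}\bfrho(u)^\alpha\,dV(u)$, since $\bfrho\circ\sigma_z^{-1}=\bfrho(z)\cdot\bfrho$, so the ``bounded above and below'' step is not even needed). There is no comparison to make with the paper's argument: the paper does not prove this lemma at all but simply quotes it from \cite{LD}, so your direct-computation approach supplies what the paper omits.
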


\subsection{The Bloch space}
In this section, we introduce the definition of the Bloch space on $T_{B}$. It is important to note that, while the Siegel upper half-space and the unit ball are biholomorphically equivalent, a clear and straightforward definition of the Bloch space on $T_{B}$ has not been previously provided. To ensure that the Bloch space possesses Möbius invariance, we first introduce the invariant gradient.

For $z\in T_B$, let $(b_{i,j})$ denote an $n\times n$ Hermitian matrix and $(b^{i,j})$ its inverse.
Set $b(z)=\det (b_{i,j}(z))$. Define
$$B(z)=\left ( b_{i,j}(z)\right )=\frac{1}{n+1+\alpha}\left ( \frac{\partial ^2}{\partial \overline{z_i}\partial z_j}\ln{K_\alpha(z,z)}\right ), 1\leq i,j\leq n .$$
Let $Y=(y_1,y_2,\cdots, y_{n-1},-\frac{1}{2})$, the matrix $I_{n-1}$ is an $n-1$ dimensional identity matrix, and
\[
A(y)=Y'Y, \, \,   \, I'=\begin{pmatrix}I_{n-1}
	& 0\\ 
	0& 0
\end{pmatrix}.
\]
Then
\begin{equation*}
	\begin{aligned}
		\left ( {b}_{i,j}(z)\right )=\frac{\frac{1}{2}(y_n-{|y'|}^2)I'+A(y)}{(y_n-{|y'|}^2)^2}=\frac{1}{\mathbf{\bfrho }(z)^2}
		\begin{pmatrix}
			\frac{\mathbf{\bfrho }(z)}{2}I_{n-1}+{y'}^Ty'& -\frac{1}{2}{y'}^T\\
			-\frac{1}{2}y'& \frac{1}{4}
		\end{pmatrix}, 
	\end{aligned}
\end{equation*}
and we can get the inverse matrix
\begin{equation*}
	\left ( {b}^{i,j}(z)\right )=\mathbf{\bfrho }(z)
	\begin{pmatrix}
		2I_{n-1}& 4{y'}^T\\
		4y'& 4(y_n+{|y'|}^2)
	\end{pmatrix}, \, \,  b(z)=\frac{1}{(2\bfrho(z))^{n+1}}.
\end{equation*}

From \cite{Sto199}, it is well known that the Laplace-Beltrami operator associated with the Bergman kernel $K$ is the differential operator $\widetilde{\Delta}$ defined by
\[
\widetilde{\Delta}  =\frac{2}{b} \sum_{i,j}\bigg\{\frac{\partial}{\partial \bar{z}_i}\bigg(b b^{i,j}\frac{\partial}{\partial z_j}\big)+\frac{\partial}{\partial z_j}\bigg(bb ^{i,j}\frac{\partial}{\partial \bar{z}_i}\bigg)\bigg\}.
\]
Then substituting the above into the operator, we have
\begin{equation}\label{laplace}
	\begin{split}
		\widetilde{\Delta} = 8\bfrho(z) \Bigg\{ & \sum_{j=1}^{n-1} \frac{\partial^2}{\partial z_j \partial \bar{z}_j} 
		+ \sum_{j=1}^{n-1} 2y_j \frac{\partial^2}{\partial z_n \partial \bar{z}_j} \\
		& + \sum_{j=1}^{n-1} 2y_j \frac{\partial^2}{\partial \bar{z}_n \partial z_j} 
		+ 2(y_n + |y'|^2) \frac{\partial^2}{\partial \bar{z}_n \partial z_n} \Bigg\}
	\end{split}
\end{equation}

The operator $\widetilde{\Delta}$ is often referred to the invariant Laplacian, see \cite{Flensted}, since it has the following property.
\begin{lemma}
	\[
	\widetilde{\Delta}(g\circ \psi)=(\widetilde{\Delta}g)\circ\psi \quad \text{for all} \quad \psi\in \mathrm{Aut}(T_B),
	\]
	where $g$ is $C^2(T_B)$ and $\mathrm{Aut}(T_B)$ denote the group of automorphisms of $T_B$.
\end{lemma}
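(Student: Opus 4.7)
The plan is to recognize $\widetilde{\Delta}$ as the Laplace--Beltrami operator of the Bergman metric on $T_{B}$ and to verify that every $\psi\in\mathrm{Aut}(T_B)$ is a Hermitian isometry of that metric. Using $K_\alpha(z,z)=c_{n,\alpha}\,\bfrho(z)^{-(n+1+\alpha)}$, the definition of the Bergman matrix collapses to $b_{i,j}(z)=-\partial^{2}\ln\bfrho(z)/\partial\overline{z_i}\partial z_j$, which is independent of $\alpha$; in particular $\widetilde{\Delta}$ is intrinsically attached to the Kähler form $\omega=-i\,\partial\bar\partial\ln\bfrho$. Since the Laplace--Beltrami operator of a Hermitian metric commutes with biholomorphic isometries, the lemma reduces to showing $\psi^{\ast}\omega=\omega$, equivalently that $\ln\bfrho\circ\psi-\ln\bfrho$ is pluriharmonic on $T_{B}$.

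To prove the pluriharmonicity I would pass through the Cayley transform $\varPhi$. By Lemma~\ref{auto}(iii), $\bfrho(z)=(1-|\varPhi^{-1}(z)|^{2})|\bfrho(z,\mathbf{i})|^{2}$, and $1+[\varPhi^{-1}(z)]_n=1/\bfrho(z,\mathbf{i})$ is holomorphic in $z$ (the same identities hold at $\psi(z)$). For the conjugated automorphism $\tilde\psi:=\varPhi^{-1}\circ\psi\circ\varPhi\in\mathrm{Aut}(\mathbb{B}_n)$, writing $\tilde\psi=U\circ\varphi_a$ with $a=\tilde\psi^{-1}(0)$ and invoking the classical Möbius identity
\[
1-|\tilde\psi(\zeta)|^{2}=\frac{(1-|a|^{2})(1-|\zeta|^{2})}{|1-\langle\zeta,a\rangle|^{2}},
\]
one expresses $(1-|\tilde\psi(\zeta)|^{2})/(1-|\zeta|^{2})$ as $|F(\zeta)|^{2}$ for a nowhere-vanishing holomorphic $F$ on $\mathbb{B}_n$. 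Combining these factors gives $\bfrho(\psi(z))/\bfrho(z)=|h(z)|^{2}$ for a holomorphic $h$ on $T_{B}$, hence $\ln\bfrho\circ\psi-\ln\bfrho=2\,\mathrm{Re}\,\log h$ is pluriharmonic.

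Taking $\partial\bar\partial$ of both sides of this last identity, the right-hand side vanishes and the chain rule (using holomorphicity of $\psi$) converts the left-hand side into the tensor transformation law
\[
b_{i,j}(z)=\sum_{k,l}b_{k,l}(\psi(z))\,\overline{\partial\psi_k(z)/\partial z_i}\;\partial\psi_l(z)/\partial z_j,
\]
together with the dual identity for $(b^{i,j})$ and the determinant relation $b(z)=|J_{\mathbb{C}}\psi(z)|^{2}\,b(\psi(z))$. Substituting these into the formula \eqref{laplace} for $\widetilde{\Delta}$ and applying the chain rule to $g\circ\psi$ converts each of the two summands into its counterpart at $\psi(z)$ acting on $g$, producing $\widetilde{\Delta}(g\circ\psi)=(\widetilde{\Delta}g)\circ\psi$. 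The main obstacle is the pluriharmonicity step: combining the Cayley factors at $z$ and at $\psi(z)$ with the Möbius factor on $\mathbb{B}_n$ into a single holomorphic square root of $\bfrho\circ\psi/\bfrho$ requires careful bookkeeping; once that is done, the remaining invariance is a routine chain-rule exercise.
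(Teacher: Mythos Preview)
Your argument is correct and takes a genuinely different route from the paper's. The paper proves the invariance by transporting everything to the unit ball through the Cayley transform: it first checks by direct computation at the distinguished points $0\in\mathbb{B}_n$ and $\mathbf{i}\in T_B$ that $(\widetilde{\Delta}g)(\mathbf{i})=\widetilde{\Delta}_{\mathbb{B}_n}(g\circ\varPhi)(0)$, then bootstraps via ball automorphisms $\varphi_w$ to the global intertwining relation $\widetilde{\Delta}_{\mathbb{B}_n}(g\circ\varPhi)=(\widetilde{\Delta}g)\circ\varPhi$, and finally feeds in $\psi=\varPhi\circ\varphi\circ\varPhi^{-1}$ together with the known invariance of $\widetilde{\Delta}_{\mathbb{B}_n}$. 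In contrast, you stay on $T_B$ and argue geometrically: you identify $\widetilde{\Delta}$ as the Laplace--Beltrami operator of the K\"ahler metric with potential $-\ln\bfrho$, prove $\psi^{\ast}\omega=\omega$ by showing $\bfrho\circ\psi/\bfrho=|h|^{2}$ with $h$ holomorphic (the Cayley transform and the ball M\"obius identity enter only here, to produce $h$), and then invoke the tensor transformation of $(b_{i,j})$, $(b^{i,j})$, and $b$ to conclude invariance of the operator. The paper's approach is more computational but self-contained once one accepts the ball result; yours is more conceptual and immediately generalizes to any domain whose Bergman potential transforms by a pluriharmonic defect under automorphisms. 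One small remark: in your last paragraph you should substitute into the general Laplace--Beltrami expression $\widetilde{\Delta}=\frac{2}{b}\sum_{i,j}\bigl\{\partial_{\bar z_i}(bb^{i,j}\partial_{z_j})+\partial_{z_j}(bb^{i,j}\partial_{\bar z_i})\bigr\}$ rather than into the explicit coordinate formula \eqref{laplace}, since the chain-rule cancellation is transparent only in the former.
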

\begin{proof}
	From cayley transform, so we let
	\[
	\varPhi(z)=(w_1, w_2, \cdots, w_n)=\left ( \frac{\sqrt{2}z'}{1+z_{n}}, i\frac{1-z_n}{1+z_n}-i\frac{z' \cdot z'}{(1+z_n)^2} \right ) ,\ \ z\in \mathbb{B}_n.
	\]
	Though \eqref{transform}, let $\psi	=\Phi\circ \varphi_{\Phi^{-1}(z)}\circ\Phi^{-1}$, and $\psi(z)=\mathbf{i}$.
	For the unit ball $\mathbb{B}_n$, the Laplace-Beltrami operator denote by $\widetilde{\Delta}_{\mathbb{B}_n}$,  then from \cite{Sto199}, we have
	\[
	\widetilde{\Delta}_{\mathbb{B}_n}=4(1 - |z|^2)\sum_{i,j=1}^{n} \left[\delta_{i,j} - \bar{z}_i z_j\right] \frac{\partial^2}{\partial z_j \partial \bar{z}_i},
	\]
	where $\delta_{i,j}$ is the Kronecker delta.
	And
	\[
	\widetilde{\Delta}_{\mathbb{B}_n}(f\circ \varphi)=(\widetilde{\Delta}_{\mathbb{B}_n}f)\circ \varphi, 
	\]
		where $f$ is $C^2(\mathbb{B}_n)$, and $\varphi \in \mathrm{Aut}(\mathbb{B}_n)$.

By $\varPhi$ is holomorphic mapping, means $\frac{\partial w_i}{\partial \bar{z}_j}=0$, then using the following complex version of the chain rule (see \cite{Rudin1980}):
\begin{equation*}
	\frac{\partial g}{\partial z_j}=\sum_{i=1}^{n} \left ( \frac{\partial f}{\partial w_i}\frac{\partial w_i}{\partial z_j} +\frac{\partial f}{\partial \bar {w}_i}\frac{\partial \bar {w}_i}{\partial z_j}  \right ) ,
\end{equation*}
\begin{equation*}
	\frac{\partial g}{\partial \bar{z}_j}=\sum_{i=1}^{n} \left ( \frac{\partial f}{\partial w_i}\frac{\partial w_i}{\partial \bar{z}_j} +\frac{\partial f}{\partial \bar {w}_i}\frac{\partial \bar {w}_i}{\partial \bar{z}_j}  \right ) .
\end{equation*}

 For any $ j=0,\cdots ,n-1$, we have
\[
\frac{\partial (f\circ \varPhi)}{\partial \bar {z}_j}=\sqrt{2} \frac{\partial  f(\mathbf{i} )}{\partial  \bar{w}_j } , 
\]
\[
 \frac{\partial^2(f\circ \varPhi)(0)}{\partial z_j\partial \bar{z}_j } =2 \frac{\partial^2f(\mathbf{i} )}{\partial {w}_j\partial \bar{w}_j},
\]
and
\[
\frac{\partial (f\circ \varPhi)}{\partial \bar {z}_n}=2i \frac{\partial  f(\mathbf{i} )}{\partial  \bar{w}_n },
\]
\[
 \frac{\partial^2(f\circ \varPhi)(0)}{\partial z_n\partial \bar{z}_n } =4\frac{\partial^2f(\mathbf{i} )}{\partial {w}_n\partial \bar{w}_n}.
\]
Then we have
\[
(\widetilde{\Delta}g)(\mathbf{i})=\widetilde{\Delta}_{\mathbb{B}_n}(g\circ \varPhi)(0).
\]
Moreover, for $w\in \mathbb{B}_n$, we can choose $\varphi_w\in \mathrm{Aut}(\mathbb{B}_n)$, $\varphi_w(0)=w$, and $\varPhi \circ\varphi_w$ is still a holomorphic mapping from $\mathbb{B}_n$ to $T_B$, so
\begin{align*}
	\widetilde{\Delta}_{\mathbb{B}_n}(g\circ \varPhi)(w)
	&=\widetilde{\Delta}_{\mathbb{B}_n}(g\circ \varPhi)(\varphi_w(0))\\
	&=\widetilde{\Delta}_{\mathbb{B}_n}(g\circ \varPhi \circ \varphi_w)(0)\\
	&=(\widetilde{\Delta}g)\circ (\varPhi \circ \varphi_w(0))\\
	&=(\widetilde{\Delta}g)\circ (\varPhi(w)).
\end{align*}
Hence, for $z\in T_B$, we have
\begin{align*}
(\widetilde{\Delta}g)\circ\psi(z)
=(\widetilde{\Delta}g)(\mathbf{i})&=\widetilde{\Delta}_{\mathbb{B}_n}(g\circ \varPhi)(0)\\
&=\widetilde{\Delta}_{\mathbb{B}_n}(g \circ \varPhi) \circ \varphi_{\Phi^{-1}(z)}(\Phi^{-1}(z))\\
&=\widetilde{\Delta}_{\mathbb{B}_n}(g \circ \varPhi \circ \varphi_{\Phi^{-1}(z)})\circ (\Phi^{-1}(z))\\
&=\widetilde{\Delta}(g \circ \varPhi \circ \varphi_{\Phi^{-1}(z)}\circ \Phi^{-1})(z)\\
&=\widetilde{\Delta}(g \circ\psi)(z).
\end{align*}
\end{proof}

If $u$ and $v$ are $C^2(T_B)$ functions, then from \cite{Sto199},
\[
\widetilde{\Delta}(uv)=u\widetilde{\Delta}v+2(\widetilde{\nabla}u)v+v\widetilde{\Delta}u,
\]
where $\widetilde{\nabla}u$ is the vector field defined by 
\[
\widetilde{\nabla}u=2\sum_{i,j} b^{i,j}\Bigg\{\frac{\partial u}{\partial \bar{z}_i} \frac{\partial}{\partial z_j}+
\frac{\partial u}{\partial z_j} \frac{\partial}{\partial \bar{z}_i} \Bigg\}.
\]
Furthermore, if $f$ in $H(T_B)$, it's not hard to check that 
\[
\widetilde{\Delta} |f|^2=2 (\widetilde{\nabla} f)\overline{f}=2|\widetilde{\nabla} f|^2.
\]
In particular, we have
\[
|\widetilde{\nabla}f|^2=2\sum_{i,j} b^{i,j} \overline{\frac{\partial f}{\partial z_i}} \frac{\partial f}{\partial z_j} .
\]

Hence, from  the invariance property of $\widetilde{\Delta}$, we have
\[
|\widetilde{\nabla}(f\circ\psi)|=|\widetilde{\nabla}f\circ\psi|\quad \text{for all} \quad \psi\in \mathrm{Aut}(T_B).
\]
 Therefore, the operator $|\widetilde{\nabla}|$ is usually called invariant gradient of $T_B$.

With this in mind, we will use the invariant gradient to directly define the Bloch space on $T_{B}$. The form of the invariant gradient is given by:
$$\left |\widetilde{\nabla }f(z) \right |^2=4\mathbf{\bfrho }(z) \left ( 2\mathbf{\bfrho }(z) \left | \frac{\partial f(z)}{\partial z_n}\right |^2 +\displaystyle\sum_{j=1}^{n-1}\left | \frac{\partial f(z)}{\partial z_j}+2y_j\frac{\partial f(z)}{\partial z_n}\right |^2\right )$$
for $f\in H(T_B)$.

We define the Bloch space of $T_B$, denoted by $\mathcal{B}$, using invariant gradients. It is the space of functions $f\in H(T_B)$ such that 
\[\|f\|_{\mathcal{B}}:=\sup\{|\widetilde{\nabla} f(z)|:z\in T_B\}<\infty.\]
Clearly, this defines only a semi-norm, which is invariant under the action of $\text{Aut}(T_B)$. 
The little Bloch space, denoted by $\mathcal{B}_0$, is defined as follows:
$$\mathcal{B}_0:=\{f\in \mathcal{B}:\widetilde{\nabla} f\in C_0(\widehat{T_B})\},$$
where $C_0(\widehat{T_B})=\{f\in C(\widehat{T_B}): {\lim_{z\to \partial\widehat{T_B}}}\left | f(z)\right |=0\}$.

Next, we define the differential operator as follows:
$$\mathcal{L}_n:=\frac{\partial }{\partial z_n}, \ \ \mathcal{L}_j:=\frac{\partial }{\partial z_j}+2y_j\frac{\partial }{\partial z_n}, \ \ j=1,\cdots,n-1 .$$
For $\gamma\in \mathbb{N}^n_0$, which $\mathbb{N}_0$ denotes the set of non-negative integers, we write
\[
\mathcal{L}^\gamma :=(\mathcal{L}_1)^{\gamma_1}\cdots (\mathcal{L}_n)^{\gamma_n}.
\]
The properties of the Bloch space will be established using the differential operator introduced earlier.

 We will consider a new space 
$$\widetilde{\mathcal{B}}:=\left \{f\in \mathcal{B}:f(\mathbf{i})=0 \right \},$$
where $\mathbf{i}:=(0',i)$.

Using  $\tau_z$, we establish the relationship between the invariant gradients on $T_{B}$ and $\mathbb{B}$ as follows.
For any $f\in H(T_{B})$, it is straightforward to verify that:
\begin{equation}\label{eq:tildenablaf(i)}
	|\widetilde{\nabla} f(\mathbf{i})|=\sqrt{2}|\nabla(f\circ\Phi)(0)|,
\end{equation}
then we have
\begin{align}
	|\widetilde{\nabla}f(z)|&= |\widetilde{\nabla}(f\circ\tau_z)(\mathbf{i})| = \sqrt{2}|\nabla(f\circ\tau_{z}\circ\Phi)(0)|\nonumber\\
	&=\sqrt{2}|\nabla(f\circ\Phi\circ\varphi_{\Phi^{-1}(z)})(0)|\nonumber\\
	&=\sqrt{2}|\widetilde{\nabla}_{\mathbb{B}}(f\circ\Phi)\big(\Phi^{-1}(z)\big)|\label{eq:gradient}
\end{align}
for all $z\in T_{B}$.

\subsection{Some auxiliary results}

In order to establish our main theorems, we provide several auxiliary results that will support our proofs. We will omit most of the proofs for brevity.
\begin{lemma}[{\cite[Therom 2]{LD}}]
	There exists a positive integer $N$ such that for any $0<r\leqslant1$ we can find a sequence $\{a_k\}$ in $T_B$ with the following properites:
	\begin{enumerate}
		\item $T_B=\bigcup_{k=1}^{\infty}{D\left( a_k,r \right)};$
		\item The sets $D(a_k,r/4)$ are mutually disjoint;
		\item Each point $z\in T_B$ belongs to at most $N$ of the sets $D(a_k,r)$.
	\end{enumerate}
\end{lemma}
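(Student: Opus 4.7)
The plan is to prove this by the standard maximal separated sequence argument used in the unit ball and in the Siegel upper half-plane, adapted to the Bergman metric on $T_B$. The key inputs are that $(T_B,\beta)$ is a separable metric space, that $\beta$ satisfies the triangle inequality, and the volume estimate $V_\alpha(D(z,r))\simeq \bfrho(z)^{n+1+\alpha}$ provided by Lemma \ref{auto}(v). A preliminary fact I would record first is that $\bfrho(w)\simeq \bfrho(z)$ whenever $\beta(z,w)\le 2r$; this follows from the explicit form of $\mathbf{B}(z)$ given in Section 2.3 and the fact that $\beta$ controls the logarithmic derivative of $\bfrho$ along geodesics. With this in hand, $V_\alpha(D(w,s))\simeq V_\alpha(D(z,s))$ uniformly for $w\in D(z,2r)$ and $0<s\le r$.

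Next, I would construct the sequence $\{a_k\}$ by a Zorn/greedy argument: choose a maximal collection of points in $T_B$ that are pairwise $\beta$-separated by at least $r/2$. Separability of $T_B$ in the Bergman metric guarantees that such a maximal collection is at most countable, and I can enumerate it as $\{a_k\}_{k=1}^{\infty}$. Property (1) is immediate from maximality: if some $z\in T_B$ satisfied $\beta(z,a_k)\ge r/2$ for every $k$, then $z$ could be added to the collection, contradicting maximality; hence every $z$ lies in some $D(a_k,r/2)\subset D(a_k,r)$. Property (2) follows from the triangle inequality, since $\beta(a_k,a_j)\ge r/2$ for $k\ne j$ forces $D(a_k,r/4)\cap D(a_j,r/4)=\emptyset$.

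The finite-overlap property (3) is where the real work is. Fix $z\in T_B$ and let $I(z)=\{k:z\in D(a_k,r)\}$. For every $k\in I(z)$ the triangle inequality gives $D(a_k,r/4)\subset D(z,2r)$, and by property (2) these small balls are mutually disjoint. Summing volumes,
\[
\sum_{k\in I(z)}V_\alpha\bigl(D(a_k,r/4)\bigr)\;\le\;V_\alpha\bigl(D(z,2r)\bigr).
\]
Using the preliminary comparability $\bfrho(a_k)\simeq \bfrho(z)$ for $k\in I(z)$ together with \eqref{D(z,r)}, both sides scale like $\bfrho(z)^{n+1+\alpha}$ with constants depending only on $r\in(0,1]$, so $\#I(z)\le N$ for some $N$ independent of $z$ (and even of $r$ on $(0,1]$, if one tracks the dependence). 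I expect the main obstacle to be the comparability $\bfrho(w)\simeq \bfrho(z)$ on Bergman balls: this is technically the heart of the argument, and although it is standard in the unit ball, on $T_B$ one has to verify it directly from the form of $\mathbf{B}(z)$ computed in Section 2.3, or else transfer it from $\mathbb{B}_n$ via the Cayley map $\varPhi$ using Lemma \ref{auto}(iii)--(iv), which relates $1-|\varPhi^{-1}(z)|^2$ to $\bfrho(z)/|\bfrho(z,\mathbf{i})|^2$.
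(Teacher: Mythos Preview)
Your argument is correct and is exactly the standard maximal-separated-sequence proof used for this kind of covering lemma on spaces of homogeneous type. There is nothing to compare against, however: in the paper this lemma is simply quoted from \cite[Theorem 2]{LD} and carries no proof of its own. Your sketch would serve as a self-contained proof; the only point worth tightening is the comparability $\bfrho(w)\simeq\bfrho(z)$ on Bergman balls, which indeed follows most cleanly by transferring from $\mathbb{B}_n$ via $\varPhi$ using Lemma~\ref{auto}(iii) together with \eqref{eq:Bergmanmetric}.
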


\begin{lemma}[{\cite[Lemma 2]{LD}}]\label{eqn:keylem2}
	Let $r, s>0, t>-1,$ and $r+s-t>n+1$, then 
	\begin{equation}
		\int_{T_{B}}\frac{{\bfrho (w)}^{t}}{{\bfrho (z,w)}^{r}{\bfrho (w,u)}^{s}}dV(w)=\frac{C_{1}(n,r,s,t)}{\bfrho (z,u)^{r+s-t-n-1}}
	\end{equation}
	for all $z,u \in T_{B}$, where
	\[
C_{1}(n,r,s,t)=\frac{2^{n+1}{\pi}^{n}\Gamma (1+t)\Gamma (r+s-t-n-1)}{\Gamma (r)\Gamma (s)}.
	\]
	In particular, let $s,t\in \mathbb{R}$, if $t>-1,s-t>n+1$, then \[\int_{T_B}{\frac{\bfrho \left( w \right) ^t}{\left| \bfrho \left( z,w \right) \right|^s}}dV\left( w \right) =\frac{C_1\left( n,s,t \right)}{\bfrho \left( z \right) ^{s-t-n-1}}.\]
\end{lemma}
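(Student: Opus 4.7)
The plan is to transfer the integral to the unit ball $\mathbb{B}_n$ via the Cayley transform $\varPhi$ of Lemma \ref{auto}, evaluate the resulting ball integral, and transport the answer back to $T_B$.

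Under the substitution $w=\varPhi(\eta)$, $z=\varPhi(\xi)$, $u=\varPhi(\zeta)$, Lemma \ref{auto} supplies $\bfrho(z,w)=(1-\langle\xi,\eta\rangle)/[(1+\xi_n)(1+\eta_n)]$ and the analogous formulas for $\bfrho(w,u)$, $\bfrho(w)$, together with $dV(w)=2^{n+1}|1+\eta_n|^{-2(n+1)}dV(\eta)$. Substituting and using $|1+\eta_n|^2=(1+\eta_n)(1+\bar{\eta}_n)$ to redistribute the exponents, the LHS rewrites as
\begin{equation*}
2^{n+1}(1+\xi_n)^r(1+\zeta_n)^s\int_{\mathbb{B}_n}\frac{(1-|\eta|^2)^t(1+\eta_n)^{r+s-t-n-1}(1+\bar{\eta}_n)^{-(t+n+1)}}{(1-\langle\xi,\eta\rangle)^r(1-\langle\eta,\zeta\rangle)^s}\,dV(\eta).
\end{equation*}
Noting $1+\eta_n=1-\langle\eta,-e_n\rangle$ and $1+\bar{\eta}_n=1-\langle-e_n,\eta\rangle$, this is a four-``pole'' Forelli--Rudin integral on $\mathbb{B}_n$ with two interior points $\xi,\zeta$ and two boundary points $\pm e_n$. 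Expanding the two boundary-pole factors as binomial series in $\eta_n$ and $\bar{\eta}_n$ reduces the integral to a double series of classical two-pole ball Forelli--Rudin integrals, which have the familiar closed form in $(1-\langle\xi,\zeta\rangle)^{-\bullet}$. A Chu--Vandermonde-type resummation then collapses the double sum to $(C_1(n,r,s,t)/2^{n+1})(1+\xi_n)^{s-t-n-1}(1+\zeta_n)^{r-t-n-1}(1-\langle\xi,\zeta\rangle)^{-(r+s-t-n-1)}$; multiplying by the prefactor and invoking Lemma \ref{auto}(iv) produces $C_1(n,r,s,t)/\bfrho(z,u)^{r+s-t-n-1}$, which is the claimed identity.

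The ``in particular'' case follows from the main identity by setting $u=z$ and $r=s=s'/2$: using $\bfrho(w,z)=\overline{\bfrho(z,w)}$ turns $\bfrho(z,w)^r\bfrho(w,z)^s$ into $|\bfrho(z,w)|^{s'}$, and $\bfrho(z,z)=\bfrho(z)$ supplies the right-hand side. The main obstacle is the combinatorial bookkeeping in the ball step: after the Cayley transfer the integrand is \emph{not} a standard two-pole Forelli--Rudin, and the double-binomial expansion must be carefully convolved with derivatives of the two-pole ball identity so that the resulting Pochhammer/Gamma sums collapse to the expected rational function. A lighter but only partial shortcut is to verify the identity first on the slice $r=n+1+t$ directly via the reproducing property of $K_t$ on $A^2_t(T_B)$ applied to $w\mapsto K_\beta(w,u)$ (the constants match $C_1$ exactly), and then extend by an identity-theorem argument; this requires enlarging the one-dimensional agreement set, either through derivative tricks in $z_n$ that shift $r$ by integers, or via the symmetry $\overline{I(r,s;z,u)}=I(s,r;u,z)$ which yields a second calibration slice $s=n+1+t$.
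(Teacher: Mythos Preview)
The paper does not prove this lemma; it is quoted verbatim from \cite[Lemma 2]{LD} and used as a black box. So there is no ``paper's own proof'' to compare against, only the reference.

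Your Cayley--transform strategy has a genuine gap at the step you yourself flag as the ``main obstacle''. After transferring to $\mathbb{B}_n$ and stripping the monomial factors $(1+\eta_n)^{\bullet}(1+\bar\eta_n)^{\bullet}$ by binomial expansion, the building blocks you land on are integrals of the form
\[
\int_{\mathbb{B}_n}\frac{(1-|\eta|^2)^t\,\eta_n^{j}\,\bar\eta_n^{k}}{(1-\langle\xi,\eta\rangle)^r(1-\langle\eta,\zeta\rangle)^s}\,dV(\eta),
\]
which are $\partial_{\xi_n}^{k}\partial_{\bar\zeta_n}^{j}$--derivatives (up to constants) of the basic two--pole integral. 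That basic integral is \emph{not} a pure power $(1-\langle\xi,\zeta\rangle)^{-\bullet}$: a direct orthogonal expansion gives
\[
\int_{\mathbb{B}_n}\frac{(1-|\eta|^2)^t}{(1-\langle\xi,\eta\rangle)^r(1-\langle\eta,\zeta\rangle)^s}\,dV(\eta)
=\frac{\pi^n\Gamma(1+t)}{\Gamma(n+1+t)}\;{}_2F_1\!\big(r,s;\,n+1+t;\,\langle\xi,\zeta\rangle\big),
\]
which collapses to a power only on the slices $r=n+1+t$ or $s=n+1+t$. So the sentence ``which have the familiar closed form in $(1-\langle\xi,\zeta\rangle)^{-\bullet}$'' is false for generic $r,s$, and a Chu--Vandermonde argument cannot repair this by itself: you would have to show that the double binomial sum, acting on derivatives of a genuine ${}_2F_1$, miraculously cancels all the hypergeometric tail. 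That cancellation does happen (the $T_B$ identity is true), but it is the entire content of the lemma, not a bookkeeping afterthought. Note also that Lemma~\ref{auto}(iv) as printed has $(1+\eta_n)$ where $(1+\bar\eta_n)$ is meant; you have propagated that typo into your displayed integrand, so the exponents on $(1+\eta_n)$ and $(1+\bar\eta_n)$ are not the ones you wrote.

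The argument actually used for such Forelli--Rudin identities on Siegel--type tube domains (and presumably in \cite{LD}) avoids the ball entirely: one writes each kernel $\bfrho(z,w)^{-r}$ as a Laplace transform over the dual cone, integrates first in $x=\mathrm{Re}\,w\in\mathbb{R}^n$ via Plancherel (this produces a delta and explains the $\Gamma(r)\Gamma(s)$ in the denominator of $C_1$), and then evaluates the remaining cone integral in $y=\mathrm{Im}\,w$ using the Gamma function of the cone $B$. Your ``in particular'' derivation by taking $u=z$, $r=s$ is correct.
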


\begin{lemma}[{\cite[Theorem 2.2]{DK}}]\label{projection}
	Suppose that $1\leq p < \infty$, $\lambda>-1$ and $\alpha\in \mathbb{R}$ satisfies
	\[
	\begin{cases}
		\alpha > \frac {\lambda+1}{p}-1,& 1<p<\infty,\\
		\alpha \geq \lambda,& p=1.
	\end{cases}
	\]
	If $f\in A^p_\lambda (T_{B})$, then $f=\mathcal{T} _\alpha f$.
\end{lemma}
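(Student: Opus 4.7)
The plan is to induct on $N$. The base case $N=0$ is exactly Lemma \ref{projection}, so it remains to advance the identity from $N$ to $N+1$.

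The inductive step rests on a single integration by parts in the variable $w_n$, exploiting the fact that the kernel $\bfrho(z,w)^{-(n+1+\alpha)}$ depends on $w$ only through $\bar w$, so $\partial_{w_n}\bfrho(z,w)=0$. Since $y_n=(w_n-\bar w_n)/(2i)$, one computes $\partial_{w_n}\bfrho(w)=-i/2$, hence the algebraic identity
$$\bfrho(w)^{\alpha+N} \;=\; \frac{2i}{\alpha+N+1}\,\frac{\partial}{\partial w_n}\bigl[\bfrho(w)^{\alpha+N+1}\bigr].$$
Substituting this into $\mathcal{T}_\alpha(\bfrho^N\mathcal{L}_n^N f)(z)$ and integrating by parts in $w_n$ transfers $\partial/\partial w_n$ onto $\mathcal{L}_n^N f$, producing $\mathcal{L}_n^{N+1}f$, while the kernel is preserved because it is $w_n$-independent. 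Combined with the inductive hypothesis, this yields a recursion of the form
$$\mathcal{T}_\alpha(\bfrho^N\mathcal{L}_n^N f) \;=\; \frac{\pm 2i}{\alpha+N+1}\,\mathcal{T}_\alpha(\bfrho^{N+1}\mathcal{L}_n^{N+1} f),$$
and telescoping from $N=0$ produces the factor $\Gamma(1+\alpha)/\Gamma(1+\alpha+N)$ together with the power of $2i$.

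Two analytic points need to be verified. Writing $\partial/\partial w_n=\tfrac12(\partial/\partial x_n-i\partial/\partial y_n)$, the resulting real integrations by parts generate boundary contributions at $x_n\to\pm\infty$, $y_n\to\infty$, and on the face $y_n=|y'|^2$ (i.e.\ $\bfrho(w)=0$). The contribution on the last face vanishes because the integrand carries the factor $\bfrho(w)^{\alpha+N+1}$ with strictly positive exponent; the contributions at infinity vanish using decay of the kernel together with the standard sub-mean-value control $|\mathcal{L}_n^N f(w)|\lesssim \bfrho(w)^{-N}\|f\|_{L^p_\lambda(D(w,1))}$. Absolute convergence of every integral in sight, and in particular the finiteness of $\mathcal{T}_\alpha(\bfrho^N\mathcal{L}_n^N f)$ on $A^p_\lambda(T_B)$, follows from Lemma \ref{eqn:keylem2} precisely under the hypotheses $\alpha>(\lambda+1)/p-1$ (respectively $\alpha\ge\lambda$ when $p=1$).

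The principal technical obstacle is rigorously justifying vanishing of boundary terms for a general $f\in A^p_\lambda$, since such $f$ need not possess pointwise decay near $bT_B$ or at infinity. The standard remedy is a density argument: first establish the identity on the dense subspace spanned by reproducing kernels $K_\beta(\cdot,w_0)$ with $\beta$ chosen sufficiently large, whose explicit power-decay in $\bfrho$ trivializes all the boundary estimates, and then extend by continuity using the $A^p_\lambda$-boundedness of the operator $f\mapsto \mathcal{T}_\alpha(\bfrho^N\mathcal{L}_n^N f)$. This boundedness, in turn, follows from Lemma \ref{eqn:keylem2} in exactly the parameter range prescribed by the theorem, which is reassuring: the hypotheses on $(p,\lambda,\alpha)$ are precisely those that make both the $N=0$ reproducing formula of Lemma \ref{projection} and the higher-order derivative representation simultaneously valid.
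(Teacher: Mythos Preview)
Your argument addresses Theorem~D (the $N$-step identity $f=C_N\,\mathcal{T}_\alpha(\bfrho^N\mathcal{L}_n^N f)$), not the displayed Lemma~\ref{projection} (the $N=0$ case), which the paper cites from \cite{DK} without proof; I therefore compare your proposal to the paper's proof of Theorem~D.

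Your route---induction on $N$ via integration by parts in $w_n$, using that the kernel is anti-holomorphic in $w$ while $\partial_{w_n}\bfrho(w)=-i/2$---is sound but genuinely different from the paper's. The paper never integrates by parts and never inducts on $N$. For $p=1$, $\lambda=\alpha$ it represents $\mathcal{L}_n^N f$ through a reproducing formula with an auxiliary parameter $\gamma>\alpha$, substitutes into $\mathcal{T}_\alpha(\bfrho^N\mathcal{L}_n^N f)$, and evaluates the resulting iterated integral by Fubini together with the closed-form kernel identity of Lemma~\ref{eqn:keylem2}. For the remaining parameters it sets $g:=f-C_N\,\mathcal{T}_\alpha(\bfrho^N\mathcal{L}_n^N f)$, observes that $\mathcal{L}_n^N g=\mathcal{L}_n^N f-\mathcal{T}_{\alpha+N}(\mathcal{L}_n^N f)=0$ by Lemma~\ref{projection} applied at the shifted weight $\alpha+N$, and then proves that any $g\in A^p_\lambda$ with $\mathcal{L}_n^N g\equiv0$ must vanish (a polynomial-in-$z_n$ reduction combined with divergence of $\int_{|y'|^2}^\infty(y_n-|y'|^2)^{(N-1)p+\lambda}\,dy_n$). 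Your approach is more direct and makes the origin of the $\Gamma$-factor transparent; the paper's approach avoids all boundary-term and density analysis at the price of this auxiliary uniqueness step. One small correction on your side: the pointwise bound you quote, $|\mathcal{L}_n^N f(w)|\lesssim\bfrho(w)^{-N}\|f\|_{L^p_\lambda(D(w,1))}$, is missing a factor $\bfrho(w)^{-(n+1+\lambda)/p}$, so under the stated hypothesis $p(\alpha+1)>\lambda+1$ alone it would \emph{not} kill the boundary contribution at $\bfrho(w)=0$; this is precisely why your density-then-continuity step, rather than a direct estimate, is the right mechanism, and with that in place the argument goes through.
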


\begin{lemma}\label{equalivent}
	For any $r>0$, the inequalities 
	$$\left| \bfrho \left( z,u \right) \right|\simeq \left| \bfrho \left( z,v \right) \right|$$
	hold for all $z,u,v\in T_{B} $ with $\beta (u,v) <r .$
\end{lemma}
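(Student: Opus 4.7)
The plan is to pull the estimate back to the unit ball via the Cayley transform $\Phi$ and reduce it to the classical analogous comparability on $\mathbb{B}_n$. Setting $\xi=\Phi^{-1}(z)$, $\mu=\Phi^{-1}(u)$, and $\nu=\Phi^{-1}(v)$, Lemma~\ref{auto}(iv) gives the factorization
\[
\frac{\bfrho(z,u)}{\bfrho(z,v)}\;=\;\frac{1-\langle\xi,\mu\rangle}{1-\langle\xi,\nu\rangle}\cdot\frac{1+\nu_n}{1+\mu_n},
\]
so the task reduces to bounding each of these two quotients, in absolute value, above and below by constants depending only on $r$.

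Since $\Phi$ is biholomorphic and the Bergman metric is a biholomorphic invariant, the hypothesis $\beta(u,v)<r$ transfers to $\beta_{\mathbb{B}_n}(\mu,\nu)<r$. I would then invoke the standard unit-ball estimate asserting that $|1-\langle\zeta,\mu\rangle|\simeq|1-\langle\zeta,\nu\rangle|$ uniformly for $\zeta\in\overline{\mathbb{B}_n}$ whenever $\beta_{\mathbb{B}_n}(\mu,\nu)<r$. Applying this with $\zeta=\xi$ handles the first factor, and applying it with the boundary vector $\zeta=-e_n=(0',-1)\in\partial\mathbb{B}_n$, so that $1-\langle\zeta,\mu\rangle=1+\mu_n$, handles the second. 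Multiplying the two estimates yields $|\bfrho(z,u)|\simeq|\bfrho(z,v)|$.

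The only mildly delicate point, and the one I expect to be the main obstacle, is the uniformity of the classical unit-ball estimate at boundary vectors $\zeta$; this is standard but needs to be cited explicitly. A boundary-free alternative is to rewrite $1+\mu_n=1/\bfrho(u,\mathbf{i})$ and $1+\nu_n=1/\bfrho(v,\mathbf{i})$ via Lemma~\ref{auto}(iii), so the second factor becomes $\bfrho(v,\mathbf{i})/\bfrho(u,\mathbf{i})$; the special case $z=\mathbf{i}$ can then be handled directly using the automorphism $\sigma_u$ (which sends $u$ to $\mathbf{i}$, cf.\ \eqref{eqn:jacobian}) together with invariance of $\beta$, reducing matters to the evident boundedness of $|\bfrho(\cdot,\mathbf{i})|$ away from $0$ and $\infty$ on the fixed Bergman ball $D(\mathbf{i},r)$.
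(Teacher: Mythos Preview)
The paper states this lemma among its ``auxiliary results'' and omits the proof, so there is no argument in the paper to compare against. Your main line of attack---pulling back by $\Phi^{-1}$, factoring $\bfrho(z,u)/\bfrho(z,v)$ via Lemma~\ref{auto}(iv), and then invoking the standard unit-ball comparability $|1-\langle\zeta,\mu\rangle|\simeq|1-\langle\zeta,\nu\rangle|$ (uniformly in $\zeta\in\overline{\mathbb{B}_n}$, with constants depending only on $r$)---is correct and is exactly how such results are typically obtained. The extension of Zhu's Lemma~2.20 from $\zeta\in\mathbb{B}_n$ to $\zeta\in\overline{\mathbb{B}_n}$ is immediate by continuity since the constants are independent of $\zeta$, so using the boundary vector $\zeta=-e_n$ to control the factor $(1+\nu_n)/(1+\mu_n)$ is legitimate.

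Your ``boundary-free alternative'' in the last sentence, however, does not work as written. The automorphism $\sigma_u$ sends $u$ to $\mathbf{i}$ but does \emph{not} fix $\mathbf{i}$; under the transformation law $\bfrho(a,b)=\bfrho(u)\,\bfrho(\sigma_u(a),\sigma_u(b))$ the ratio $|\bfrho(u,\mathbf{i})|/|\bfrho(v,\mathbf{i})|$ becomes $|\bfrho(\mathbf{i},\sigma_u(\mathbf{i}))|/|\bfrho(\sigma_u(v),\sigma_u(\mathbf{i}))|$, in which the second argument $\sigma_u(\mathbf{i})$ varies with $u$ over all of $T_B$. So you are not reduced to bounding $|\bfrho(\cdot,\mathbf{i})|$ on the fixed ball $D(\mathbf{i},r)$; you are back to the general statement with $z=\sigma_u(\mathbf{i})$ arbitrary. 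Either drop this paragraph or replace it with a genuine argument (for instance, the boundary case $\zeta=-e_n$ you already identified, or a direct estimate on $\overline{D(\mathbf{i},r)}\times T_B$ showing $|\bfrho(a,w)|\simeq|\bfrho(\mathbf{i},w)|$ uniformly). The main approach suffices on its own.
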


\begin{lemma}[{\cite[Lemma 3.7]{Li}}]
	For any $z,w\in {T_{B}}$, we have 
	\begin{equation}\label{essiential}
	2\left| \bfrho \left( z,w \right) \right|\ge \max \left\{ \bfrho \left( z \right) ,\bfrho \left( w \right) \right\}.		
	\end{equation}

\end{lemma}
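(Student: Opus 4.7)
The plan is to prove the inequality by a direct computation of the real part of $\bfrho(z,w)$ combined with an elementary quadratic estimate on the cross term that arises in $|y'+v'|^2$. Writing $z=x+iy$ and $w=u+iv$ with $y,v\in B$, I would first expand
\[
\bfrho(z,w)=\tfrac14\Bigl(\sum_{j=1}^{n-1}(z_j-\overline{w_j})^{2}-2i(z_n-\overline{w_n})\Bigr)
\]
into real and imaginary parts. Using $z_j-\overline{w_j}=(x_j-u_j)+i(y_j+v_j)$ and $-2i(z_n-\overline{w_n})=2(y_n+v_n)-2i(x_n-u_n)$, a short calculation yields
\[
4\,\mathrm{Re}\,\bfrho(z,w)=|x'-u'|^{2}-|y'+v'|^{2}+2(y_n+v_n).
\]

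Next, I would apply the elementary bound $|y'+v'|^{2}\le 2|y'|^{2}+2|v'|^{2}$ (equivalent to $2y'\cdot v'\le|y'|^{2}+|v'|^{2}$), which is precisely the inequality needed to pair $y_n$ with $|y'|^{2}$ and $v_n$ with $|v'|^{2}$. Substituting gives
\[
4\,\mathrm{Re}\,\bfrho(z,w)\ge |x'-u'|^{2}+2(y_n-|y'|^{2})+2(v_n-|v'|^{2})=|x'-u'|^{2}+2\bfrho(z)+2\bfrho(w).
\]

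Since $y,v\in B$ forces $\bfrho(z),\bfrho(w)>0$ and $|x'-u'|^{2}\ge 0$, the real part $\mathrm{Re}\,\bfrho(z,w)$ is positive, and therefore
\[
|\bfrho(z,w)|\ge \mathrm{Re}\,\bfrho(z,w)\ge \tfrac12\bigl(\bfrho(z)+\bfrho(w)\bigr)\ge \tfrac12\max\{\bfrho(z),\bfrho(w)\},
\]
which rearranges to the claim $2|\bfrho(z,w)|\ge \max\{\bfrho(z),\bfrho(w)\}$. No substantial obstacle is anticipated; the only delicate point is selecting the symmetric bound $2y'\cdot v'\le|y'|^{2}+|v'|^{2}$ rather than, say, the triangle-inequality bound $|y'+v'|^{2}\le(|y'|+|v'|)^{2}$, because the symmetric version is exactly what allows $\bfrho(z)$ and $\bfrho(w)$ to reconstitute themselves on the right-hand side without leftover cross terms.
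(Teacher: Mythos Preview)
Your proof is correct. The paper does not supply its own proof of this lemma---it is cited from \cite[Lemma~3.7]{Li}---so there is no in-paper argument to compare against; your direct computation of $\mathrm{Re}\,\bfrho(z,w)$ together with the elementary bound $|y'+v'|^2\le 2|y'|^2+2|v'|^2$ is a clean, self-contained derivation, and indeed the same real-part trick (in the special case $w=s\mathbf{i}$) appears later in the paper inside the proof of Lemma~\ref{lem:ptwsest}.
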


\begin{lemma}\label{chara}
Let $1\leq p<\infty $, $r>0$, and $f\in L^p_\alpha(T_B)$, then $f\in BMO^p_r$ if and only if there is a constant $\lambda_z$ such that, for all $z\in T_B$, with
\[
\frac{1}{\left | D(z,r)\right |}\int_{D(z,r)}\left | f(w)-\lambda_z \right |^p dV_\alpha(w)\leq C.
\] 
\end{lemma}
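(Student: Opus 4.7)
\textbf{Proof proposal for Lemma \ref{chara}.} The plan is to prove both directions, with the forward direction being essentially a tautology and the reverse direction following from the standard BMO averaging trick.

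For the forward direction, suppose $f\in BMO^p_r$. Then I would simply take $\lambda_z = \widehat{f}_r(z)$. By the definition of $\|f\|_{BMO^p_r}$, we immediately obtain
\[
\frac{1}{|D(z,r)|}\int_{D(z,r)}|f(w)-\widehat{f}_r(z)|^p \, dV_\alpha(w) = MO_r(f)(z)^p \leq \|f\|_{BMO^p_r}^p,
\]
so $C = \|f\|_{BMO^p_r}^p$ works uniformly in $z$.

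For the reverse direction, suppose there exist constants $\lambda_z$ for which the stated bound holds. I would show $MO_r(f)(z)^p \lesssim C$ by inserting and subtracting $\lambda_z$ and invoking the triangle inequality in $L^p(D(z,r),dV_\alpha/|D(z,r)|)$. The key auxiliary estimate is the bound on $|\lambda_z - \widehat{f}_r(z)|$: since $\widehat{f}_r(z)$ is the average of $f$ on $D(z,r)$, we have
\[
|\lambda_z - \widehat{f}_r(z)| = \Bigl|\frac{1}{|D(z,r)|}\int_{D(z,r)}(\lambda_z - f(w))\, dV_\alpha(w)\Bigr|,
\]
and by Hölder's inequality this is controlled by $\bigl(\frac{1}{|D(z,r)|}\int_{D(z,r)}|f(w)-\lambda_z|^p dV_\alpha(w)\bigr)^{1/p} \leq C^{1/p}$. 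Applying the elementary inequality $(a+b)^p \leq 2^{p-1}(a^p+b^p)$ then gives
\[
\frac{1}{|D(z,r)|}\int_{D(z,r)}|f(w)-\widehat{f}_r(z)|^p dV_\alpha(w) \leq 2^{p-1}\Bigl(C + |\lambda_z - \widehat{f}_r(z)|^p\Bigr) \leq 2^p C,
\]
uniformly in $z$, which establishes $f\in BMO^p_r$ with $\|f\|_{BMO^p_r} \leq 2C^{1/p}$.

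There is no real obstacle here; the argument is the classical John--Nirenberg type observation that in the definition of BMO one may replace the average by any nearby constant at the cost of a factor of $2$. The only care to take is that the estimate for $|\lambda_z - \widehat{f}_r(z)|$ is uniform in $z$, which is automatic from the uniform bound hypothesized on $\lambda_z$.
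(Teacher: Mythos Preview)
Your proposal is correct and matches the paper's proof essentially line for line: both directions are handled identically, taking $\lambda_z=\widehat{f}_r(z)$ for the forward implication and, for the converse, bounding $|\widehat{f}_r(z)-\lambda_z|$ via H\"older on the average and then combining with the triangle inequality. The only cosmetic difference is that the paper applies Minkowski's inequality directly at the level of the $L^p$-norm rather than the pointwise inequality $(a+b)^p\le 2^{p-1}(a^p+b^p)$, yielding the same constant $2C^{1/p}$.
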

\begin{proof} 
 If $f\in BMO^p_r$, then the inequality holds when $\lambda_z=\widehat{f}_r(z)$. 
 
 Conversely, assume that the above inequality holds for all $z\in T_B$, so by the triangle inequality, 
 and Holder's inequality, we have
 \begin{align*}
 	&\left ( \frac{1}{\left | D(z,r)\right |}\int_{D(z,r)}\left | f(w)-\widehat{f}_r(z)\right |^p dV_\alpha(w)\right )^{1/p}\\
 	&\leq 
 	\left (\frac{1}{\left | D(z,r)\right |}\int_{D(z,r)}\left | f(w)-\lambda_z \right |^p dV_\alpha(w)\right )^{1/p}+\left | \widehat{f}_r(z)-\lambda_z\right |\\
 	&=\left (\frac{1}{\left | D(z,r)\right |}\int_{D(z,r)}\left | f(w)-\lambda_z \right |^p dV_\alpha(w)\right )^{1/p}
 	+\left | \frac{1}{\left | D(z,r)\right |}\int_{D(z,r)}\left ( f(w)-\lambda_z \right ) dV_\alpha(w)\right |\\
 	&\leq 2\left (\frac{1}{\left | D(z,r)\right |}\int_{D(z,r)}\left | f(w)-\lambda_z \right |^p dV_\alpha(w)\right )^{1/p}\\
 	&\leq 2C.
 \end{align*}
\end{proof}

\begin{lemma}
	Let \( r > 0 \) and \( f \) be a continuous function on \( T_B \). Then \( f \in BO_r \) if and only if there exists a constant \( C \) such that
	\[
	|f(z) - f(w)| \leq C (\beta(z,w) + 1)
	\]
	for all \( z, w \in T_B \). Furthermore, \( BO_r \) is independent of \( r \), so we will simply write \( BO \) for \( BO_r \).
\end{lemma}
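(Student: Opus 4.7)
The plan is a standard Bergman-metric chaining argument.

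\textbf{Easy direction.} First I would dispatch the "if" direction: assuming $|f(z)-f(w)|\leq C(\beta(z,w)+1)$ for all $z,w\in T_B$, then for $w\in D(z,r)$ we have $\beta(z,w)<r$, so $|f(z)-f(w)|\leq C(r+1)$. Taking the supremum over $w\in D(z,r)$ gives $\omega_r(f)(z)\leq C(r+1)$, hence $f\in BO_r$ with $\|f\|_{BO_r}\leq C(r+1)$.

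\textbf{Hard direction via chaining.} For the "only if" direction, suppose $f\in BO_r$ and fix $z,w\in T_B$. By definition of the Bergman metric $\beta$ as an infimum of lengths of $C^1$ curves, for any $\varepsilon>0$ there is a $C^1$ curve $\gamma\colon[0,1]\to T_B$ with $\gamma(0)=z$, $\gamma(1)=w$, and $l(\gamma)<\beta(z,w)+\varepsilon$. Pick an integer $N\leq\lceil 2(\beta(z,w)+\varepsilon)/r\rceil+1$ and subdivide $[0,1]$ by $0=t_0<t_1<\cdots<t_N=1$ so that the Bergman length of each subarc $\gamma|_{[t_k,t_{k+1}]}$ is less than $r/2$. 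Setting $z_k=\gamma(t_k)$, the infimum characterization of $\beta$ yields $\beta(z_k,z_{k+1})<r/2<r$, so $z_{k+1}\in D(z_k,r)$ and $|f(z_k)-f(z_{k+1})|\leq\omega_r(f)(z_k)\leq\|f\|_{BO_r}$. A telescoping argument then gives
\[
|f(z)-f(w)|\leq \sum_{k=0}^{N-1}|f(z_k)-f(z_{k+1})|\leq N\|f\|_{BO_r}\leq C_r\|f\|_{BO_r}(\beta(z,w)+1),
\]
after letting $\varepsilon\to 0^+$ and absorbing constants depending only on $r$.

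\textbf{Independence of $r$.} For the final claim, observe that the two directions combined show the following: if $f\in BO_r$ then $|f(z)-f(w)|\leq C(\beta(z,w)+1)$ with a constant depending on $r$ and $\|f\|_{BO_r}$; and this inequality in turn implies $f\in BO_{r'}$ for every $r'>0$, with $\|f\|_{BO_{r'}}\leq C(r'+1)$. Hence the space $BO_r$ (as a set of functions) does not depend on $r>0$, which justifies writing $BO$ for $BO_r$.

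\textbf{Main obstacle.} The only nontrivial step is the chaining construction, where I need to ensure that a subdivision of a near-geodesic into arcs of Bergman length less than $r/2$ actually produces consecutive points with $\beta(z_k,z_{k+1})<r$; this is immediate from the definition of $\beta$ as an infimum over $C^1$ curves and does not require exhibiting an honest geodesic. Everything else is the standard triangle-inequality telescoping.
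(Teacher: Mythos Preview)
Your proposal is correct and follows the standard Bergman-metric chaining argument; the paper itself omits the proof and cites \cite[Lemma 3.3]{Si}, where exactly this approach is used. One minor cosmetic point: your bound on $N$ should read $N \leq 2(\beta(z,w)+\varepsilon)/r + 1$ (the ceiling is unnecessary once you add $1$), but this does not affect the argument.
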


\begin{proof}
	It is similar with \cite[Lemma 3.3]{Si}, so we omit it.
\end{proof}

In particular, we also have the following result:
\begin{cor}\label{cor:Blochbeta}
	If \( f \in \mathcal{B} \), then
	\[
	|f(z) - f(w)| \leq \frac{\|f\|_{\mathcal{B}} \beta(z,w)}{\sqrt{2}}
	\]
	for all \( z, w \in T_B \).
\end{cor}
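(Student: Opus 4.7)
The plan is to apply the fundamental theorem of calculus along a piecewise $C^1$ curve joining $z$ and $w$, combine it with a Cauchy--Schwarz inequality adapted to the Hermitian Bergman matrix $\mathbf{B}$ and its inverse, and then take the infimum over curves. Let $\gamma\colon [0,1]\to T_B$ be any $C^1$ curve with $\gamma(0)=z$ and $\gamma(1)=w$. Since $f$ is holomorphic, the chain rule gives
\[
f(w)-f(z)=\int_0^1 \sum_{j=1}^n \frac{\partial f}{\partial z_j}(\gamma(t))\,\gamma'_j(t)\,dt.
\]

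The key step is a pointwise estimate of the integrand using the fact that $\mathbf{B}(\gamma(t))=(b_{i,j}(\gamma(t)))$ is positive definite Hermitian with inverse $(b^{i,j}(\gamma(t)))$. Applying the Cauchy--Schwarz inequality in the form $|\langle u,v\rangle|\le \langle B^{-1}u,u\rangle^{1/2}\langle Bv,v\rangle^{1/2}$, with $u=\overline{\partial f}(\gamma(t))$ and $v=\gamma'(t)$, we obtain
\[
\Bigl|\sum_{j=1}^n \frac{\partial f}{\partial z_j}(\gamma(t))\,\gamma'_j(t)\Bigr|^{2}
\le \Bigl(\sum_{i,j} b^{i,j}\,\overline{\tfrac{\partial f}{\partial z_i}}\tfrac{\partial f}{\partial z_j}\Bigr)\Bigl(\sum_{i,j} b_{i,j}\,\overline{\gamma'_i}\,\gamma'_j\Bigr).
\]
The first factor on the right is exactly $|\widetilde{\nabla} f(\gamma(t))|^2/2$ by the identity $|\widetilde{\nabla} f|^2=2\sum b^{i,j}\overline{\partial_i f}\,\partial_j f$ from the preliminaries, while the second factor equals $\langle \mathbf{B}(\gamma(t))\gamma'(t),\gamma'(t)\rangle$. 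Using $|\widetilde{\nabla}f|\le \|f\|_{\mathcal{B}}$, it follows that
\[
|f(w)-f(z)|\le \frac{\|f\|_{\mathcal{B}}}{\sqrt{2}}\int_0^1 \sqrt{\langle \mathbf{B}(\gamma(t))\gamma'(t),\gamma'(t)\rangle}\,dt.
\]

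Finally, taking the infimum over all admissible curves $\gamma$ recognizes the right-hand integral (interpreted with the square root, as is customary for a Riemannian length functional and consistent with $\beta$ being a genuine distance) as the Bergman distance $\beta(z,w)$. This yields the claimed inequality
\[
|f(z)-f(w)|\le \frac{\|f\|_{\mathcal{B}}\,\beta(z,w)}{\sqrt{2}}.
\]

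The main (and really only) subtlety is correctly setting up the Cauchy--Schwarz inequality with respect to the matrices $\mathbf{B}$ and $\mathbf{B}^{-1}$ so that the resulting squared pairing matches $|\widetilde{\nabla}f|^2/2$ on one side and $\langle \mathbf{B}\gamma',\gamma'\rangle$ on the other, which is exactly the Hermitian pairing producing the Bergman line element. Once that identification is made, the rest of the argument is a routine chain-rule-plus-infimum computation, and the $1/\sqrt{2}$ factor appears naturally from the normalization $|\widetilde{\nabla}f|^2=2\sum b^{i,j}\overline{\partial_i f}\,\partial_j f$.
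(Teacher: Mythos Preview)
Your proof is correct and more direct than the paper's. The paper derives the corollary from Lemma~\ref{lem:nabla.Zhu 2.24a}, which is proved by transporting everything to the unit ball via the Cayley transform $\Phi$, invoking Timoney's inequality $|h(\xi)-h(\eta)|\le \sup_{\zeta\in\Gamma}|\widetilde{\nabla}_{\mathbb B}h(\zeta)|\,\beta_{\mathbb B}(\xi,\eta)$ on $\mathbb B_n$, and then pulling back using the identities \eqref{eq:gradient} and \eqref{eq:Bergmanmetric}. Your argument stays entirely on $T_B$: a chain-rule integral along a curve, the weighted Cauchy--Schwarz inequality $|\langle u,v\rangle|^2\le\langle \mathbf B^{-1}u,u\rangle\langle \mathbf Bv,v\rangle$ with $u=\overline{\partial f}$ and $v=\gamma'$, and identification of the two factors with $|\widetilde\nabla f|^2/2$ and the Bergman line element. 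This is both shorter and intrinsic---it avoids the detour through the ball and the external citation to Timoney. The paper's route, on the other hand, yields the slightly sharper Lemma~\ref{lem:nabla.Zhu 2.24a} (with a supremum over the geodesic only) as a byproduct.

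One remark: you are right to flag the square root. The paper's displayed definition of $l(\gamma)$ omits it, but everything downstream---$\beta$ being a distance, the identity $\beta=\beta_{\mathbb B}\circ(\Phi^{-1}\times\Phi^{-1})$, and indeed this very corollary---requires the standard Riemannian length $\int_0^1\langle \mathbf B\gamma',\gamma'\rangle^{1/2}\,dt$. Your reading is the intended one.
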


\section{Proofs of the Main Results: Theorems A and B}
In this section, we provide the proofs of our main results, {\bfseries Theorems A and B}, which characterize the relationships between $BMO^p_r$ and properties of holomorphic functions. {\bfseries Theorem A} states that the following conditions are equivalent:
\begin{thm}
Suppose $r>0$, $1\leq p<\infty $, and $p(\alpha+1)>\lambda+1$, let $f\in L^p_\lambda(T_B)$, the following conditions are equivalent:
\begin{enumerate}[(a)]
	\item $f\in BMO^p_r$.
	\item $f=f_1+f_2$ with $f_1\in BO$ and $f_2\in BA^p$.
	\item $H^{(\alpha)}_f$ and $H^{(\alpha)}_{\overline f}$ are both bounded on $A^p_\lambda(T_B)$.
\end{enumerate}
\end{thm}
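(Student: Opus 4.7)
I will establish the full equivalence via the cycle (b) $\Rightarrow$ (a) $\Rightarrow$ (b) $\Rightarrow$ (c) $\Rightarrow$ (a). The first two directions will rely only on the metric structure and on the definitions of $BO$, $BA^p$, and $BMO^p_r$, while the latter two will use the integral representation of $H^{(\alpha)}_f$ together with the Forelli--Rudin/Schur-test machinery provided by Lemma 2.3. For (b) $\Rightarrow$ (a) I would use subadditivity $MO_r(f) \leq MO_r(f_1)+MO_r(f_2)$, bound the first term by $\omega_r(f_1)(z)\lesssim \|f_1\|_{BO}(r+1)$, and bound the second via Lemma 3.4 with $\lambda_z=0$, giving $MO_r(f_2)(z)\leq 2\,\widehat{|f_2|^p}_r(z)^{1/p}$. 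For (a) $\Rightarrow$ (b) I would take the canonical decomposition $f_1:=\widehat{f}_r$ and $f_2:=f-f_1$: to show $f_1\in BO$ I would estimate $|\widehat{f}_r(z)-\widehat{f}_r(w)|$ for $\beta(z,w)<r$ by comparing both averages against a common base point in $D(z,r)\cap D(w,r)$ (whose measure is comparable to $|D(z,r)|$ by Lemma 2.4 and (2.5)) and invoking H\"older with the $BMO^p_r$ norm of $f$; membership $f_2\in BA^p$ then follows from the triangle inequality pivoted at $\widehat{f}_r(z)$ on $D(z,r)$ combined with the just-proved $f_1\in BO$.

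\textbf{(b) $\Rightarrow$ (c).} Since $p(\alpha+1)>\lambda+1$, $P_\alpha$ reproduces $A^p_\lambda$, so for $g\in A^p_\lambda$ one has
\[
H^{(\alpha)}_f g(z) = \int_{T_B} K_\alpha(z,w)\bigl(f(z)-f(w)\bigr) g(w)\, dV_\alpha(w).
\]
I would split $H^{(\alpha)}_f = H^{(\alpha)}_{f_1}+H^{(\alpha)}_{f_2}$. For $f_1\in BO$, using $|f_1(z)-f_1(w)|\lesssim \beta(z,w)+1$, I would apply Schur's test; the factor $\beta(z,w)+1$ can be absorbed into an arbitrarily small negative power of $|\bfrho(z,w)|/(\bfrho(z)^{1/2}\bfrho(w)^{1/2})$, which keeps the kernel integral strictly inside the Forelli--Rudin regime of Lemma 2.3. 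For $f_2\in BA^p$, I would show the multiplier $g\mapsto f_2 g$ is bounded from $A^p_\lambda$ into $L^p_\lambda$ by invoking the sub-mean value property $|g(w)|^p\lesssim |D(w,r)|^{-1}\int_{D(w,r)}|g|^p\,dV_\lambda$, applying Fubini, and absorbing the discrepancy between $dV_\alpha$ and $dV_\lambda$ using the near-constancy of $\bfrho$ on each $D(w,r)$; composing with the $L^p_\lambda$-boundedness of $P_\alpha$ from \cite{Li} closes this direction.

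\textbf{(c) $\Rightarrow$ (a) and main obstacle.} I would test the Hankel operators against a family of normalized holomorphic atoms $g_z\in A^p_\lambda$ built from a suitable power of $K_\alpha(\cdot,z)$, scaled via Lemma 2.3 so that $\|g_z\|_{p,\lambda}\asymp 1$. The lower bound $|K_\alpha(z,w)|\asymp \bfrho(z)^{-(n+1+\alpha)}$ on $D(z,r)$ (coming from Lemma 2.4) together with (2.5) should force
\[
\int_{D(z,r)}\bigl|H^{(\alpha)}_f g_z(w)\bigr|^p dV_\lambda(w) \gtrsim \widehat{|f-c_z|^p}_r(z)
\]
for a constant $c_z$ depending on $z$; the simultaneous boundedness of $H^{(\alpha)}_f$ and $H^{(\alpha)}_{\bar f}$ should pin $c_z$ down up to a bounded correction (this is where the two hypotheses are used together, as neither alone controls both real and imaginary oscillation), after which Lemma 3.4 yields $f\in BMO^p_r$. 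The hard part will be the unboundedness of $T_B$: unlike the unit-ball setting, $g_z$ must be constructed so that the displayed lower bound is uniform in $z$ as $z\to\partial\widehat{T_B}$, and the Schur estimates in the previous paragraph require a margin that survives the degeneration of $\bfrho(z)$ on the boundary---which is precisely the role of the hypothesis $p(\alpha+1)>\lambda+1$.
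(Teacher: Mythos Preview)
Your cycle (a)$\Leftrightarrow$(b) and (b)$\Rightarrow$(c) matches the paper's argument almost exactly: the paper also takes $f_1=\widehat f_r$, $f_2=f-\widehat f_r$ (comparing the averages via $\widehat f_{2r}$ rather than an intersection of balls, but this is cosmetic), and for (b)$\Rightarrow$(c) it uses precisely your $\beta(z,w)+1$ absorption into a small power of $|\bfrho(z,w)|/(\bfrho(z)\bfrho(w))^{1/2}$ for the $BO$ piece, together with a Carleson--measure reformulation of $BA^p$ for the other piece.

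The gap is in (c)$\Rightarrow$(a). Your displayed lower bound
\[
\int_{D(z,r)}\bigl|H^{(\alpha)}_f g_z(w)\bigr|^p dV_\lambda(w)\ \gtrsim\ \widehat{|f-c_z|^p}_r(z)
\]
does not hold for a \emph{constant} $c_z$: since $H^{(\alpha)}_f g_z=(f-\phi)\,g_z$ with $\phi=P_\alpha(fg_z)/g_z$ holomorphic and genuinely $w$--dependent, restricting to $D(z,r)$ only gives control of $\int_{D(z,r)}|f-\phi|^p$, which is not what Lemma~2.7 consumes. The paper resolves this by \emph{choosing} the constant first: with $h_z=K_\lambda(\cdot,z)/\|K_\lambda(\cdot,z)\|_{A^p_\lambda}$ it sets $c_z=B_\lambda f(z)$ (the Berezin transform), proves the projection identity $P_\alpha(\overline{f_z}\,h_z)=c_z\,h_z$ where $f_z:=P_\alpha(\bar f h_z)/h_z$, and then obtains the clean splitting
\[
\|fh_z-c_z h_z\|_{L^p_\lambda}\ \le\ \|H^{(\alpha)}_f h_z\|_{L^p_\lambda}+\|P_\alpha\|\,\|H^{(\alpha)}_{\bar f}h_z\|_{L^p_\lambda}.
\]
The left side dominates the local oscillation with $\lambda_z=c_z$ by exactly your restriction-to-$D(z,r)$ argument, after which Lemma~2.7 applies. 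So the role of $H^{(\alpha)}_{\bar f}$ is not to separate ``real and imaginary oscillation'' as you suggest; it enters because the auxiliary function $f_z$ is built from $\bar f$, and $\|\bar f h_z-f_z h_z\|=\|H^{(\alpha)}_{\bar f}h_z\|$ is what makes the above triangle inequality close. Once you insert this Berezin-transform mechanism, your outline becomes the paper's proof.
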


\begin{proof}
	 $(a) \Rightarrow (b)$: It suffices to show that $BMO^p_r \subset BO+BA^p$. Given $z, w\in T_B$ with $\beta(z,w)<r$, we have

\begin{align*}
	\left | \widehat{f}_r(z)-\widehat{f}_r(w)\right |
	&\leq \left | \widehat{f}_r(z)-\widehat{f}_{2r}(z)\right |+\left | \widehat{f}_{2r}(z)-\widehat{f}_r(w)\right |\\
	&\leq \frac{1}{\left | D(z,r)\right |}\int_{D(z,r)}\left | f(u)-\widehat{f}_{2r}(z)\right |dV_\lambda(u)\\&+\frac{1}{\left | D(w,r)\right |}\int_{D(w,r)}\left | f(v)-\widehat{f}_{2r}(z)\right |dV_\lambda(v).
\end{align*}

By Holder's inequality, and Lemma \ref{equalivent}, so
\[
\left | \widehat{f}_r(z)-\widehat{f}_r(w)\right |\leq C \left \| f\right \|_{BMO^p_{2r}}.
\]
It is straightforward to show that $\widehat{f}_r(z)$ is continuous on $T_B$. Therefore, we have  $\widehat{f}_r(z)\in BO$.

Let $g=f-\widehat{f}_r$, we show that $g\in BA^p$. By triangle inequality,

\begin{align*}
	\left [ \widehat{\left | g\right |^p_r}(z)\right ]^{1/p}&=\left( \frac{1}{\left | D(z,r)\right |}\int_{D(z,r)}\left | f(u)-\widehat{f}_{r}(u)\right |^p dV_\lambda(u)\right)^{1/p}\\
	&\leq \left( \frac{1}{\left | D(z,r)\right |}\int_{D(z,r)}\left | f(u)-\widehat{f}_{r}(z)\right |^p dV_\lambda(u)\right)^{1/p}\\
	&+\left( \frac{1}{\left | D(z,r)\right |}\int_{D(z,r)}\left | \widehat{f}_{r}(z)-\widehat{f}_{r}(u)\right |^p dV_\lambda(u)\right)^{1/p}\\
	&\leq \left \| f\right \|_{BMO^p_r}+\omega _r(\widehat{f}_{r})(z)\\
	&\leq C.
\end{align*}
So, $g\in BA^p$, $f=\widehat{f}_{r}+(f-\widehat{f}_{r})\in BO+BA^p$.

$(b) \Rightarrow (c)$: For $f\in BO$, and $g\in A^p_\lambda(T_B)$, from Lemma \ref{projection}, we have
\begin{align*}
	\left | H^{(\alpha)}_fg(z)\right |&=\left | (I-P_\alpha)(fg)(z)\right |\\
	&\leq \int_{T_B}\left | f(z)-f(w)\right ||g(w)K_\alpha(z,w)| dV_\alpha(w)\\
	&\leq C\int_{T_B}\frac{\beta(z,w)+1}{|\bfrho(z,w)|^{n+1+\alpha}}|g(w)|dV_\alpha(w).
\end{align*}
From \cite{Si}, for any $\varepsilon>0$, we know that
\[
\beta(z,w)\leq 2^{2\varepsilon -1}\frac{|\bfrho(z,w)|^{2\varepsilon }}{\bfrho(z)^\varepsilon \bfrho(w)^\varepsilon },
\]
then the operator
\[
Tg(z):=\int_{T_B}\frac{\beta(z,w)+1}{|\bfrho(z,w)|^{n+1+\alpha}}|g(w)|dV_\alpha(w)
\]
is bounded on $A^p_\lambda(T_B)$, see \cite[Lemma 3.4]{Li}.

Next, for $f\in BA^p$, it follows from \cite[Lemma 6.1]{Li}, let $d\mu _f(z)=|f(z)|^pdV_\lambda(z)$.
Given $t \in \mathbb{R}$, we denote by $S_t$ the vector spaces of functions $f$ that are holomorphic in $T_B$ and satisfy the condition: \[\sup_{z\in T_B}\left| \bfrho \left( z,\mathbf{i} \right) \right|^t\left| f\left( z \right) \right|<\infty ,
\]
and it is dense in $A^p_\lambda(T_B)$ when $t>n+1/2$. So, let $g\in S_t$, we have
\begin{align*}
	\left \| H^{(\alpha)}_fg\right \|_{A^p_\lambda (T_B)}&=\left \| fg\right \|_{A^p_\lambda (T_B)}+\left \| P_\alpha(fg)\right \|_{A^p_\lambda (T_B)}\\
	&\lesssim \left \| fg\right \|_{A^p_\lambda (T_B)}
	=\int_{T_B}\left | g(z)\right |^p d\mu _f(z)\\
	&\leq C\int_{T_B}\left | g(z)\right |^p dV_\lambda(z).
\end{align*}
Thus, we have $H^{(\alpha)}_f$ and $H^{(\alpha)}_{\overline f}$ are both bounded on $A^p_\lambda(T_B)$.

$(c)\Rightarrow(a)$: Inspired by \cite{Pau2016}, for $f\in L^p_\lambda(T_B)$, and any $z\in T_B$, we can define the function as follows:
\[
\mathbf{MO}_\lambda f(z):=\left \| fh_z-B_\lambda f(z)h_z\right \|_{L^p_\lambda(T_B)},
\]
and
\[
f_z(w)=\frac{P_\alpha(\overline{f}h_z)(w)}{h_z(w)},\, w\in T_B,
\]
where $h_z(w)=\frac{K_\lambda(z,w)}{\left \| K_\lambda(z,\cdot)\right \|_{A^p_\lambda}}$.

By \eqref{essiential}, it is easy to know that
\[
|K_\lambda(z,w)|\leq 2^{n+1+\lambda}K_\lambda(z,z),
\]
and 
\[
\left \| K_\lambda(z,\cdot)\right \|^p_{A^p_\lambda}=\int_{T_B}\frac{\bfrho(w)^\lambda}{|\bfrho(z,w)|^{p(n+1+\lambda)}}dV(w)\asymp \bfrho(z)^{(1-p)(n+1+\lambda)}.
\]
Thus, let $\frac{1}{p}+\frac{1}{q}=1$, we have
\[
\left ( \int_{T_B}\left | f(w)h_z(w)\right |^pdV_\lambda(w)\right )^{1/p}\leq2^{n+1+\lambda}\left \| K_\lambda(z,\cdot)\right \|^{q-1}_{A^p_\lambda}\left ( \int_{T_B}|f(w)|^pdV_\lambda(w)\right )^{1/p},
\]
which implies that $\overline{f}h_z\in L^p_\lambda(T_B)$. The function $f_z$  and $P_\alpha(\overline{f}h_z)$ are both well-defined, see \cite[Lemma 3.4]{Li}. Then, similarly to \cite[Lemma 4.4]{Si}, from Lemma \ref{projection}, we have $\overline{f_z(z)}=B_\lambda f(z)$ and $P_\alpha(\overline{f_z}h_z)(w)=B_\lambda f(z)h_z(w)$.

And actually,
\begin{align*}
	\mathbf{MO}^p_\lambda f(z)&=\left \| fh_z-B_\lambda f(z)h_z\right \|^p_{L^p_\lambda(T_B)}\\
	&=C\int_{T_B}\frac{\left | f(w)-B_\lambda f(z)\right |^p}{|\bfrho(z,w)|^{p(n+1+\lambda)}}\bfrho(z)^{(p-1)(n+1+\lambda)}dV_\lambda(w)\\
	&\geq C\int_{D(z,r)}\frac{\left | f(w)-B_\lambda f(z)\right |^p}{\bfrho(z)^{n+1+\lambda}}dV_\lambda(w)\\
	&\geq \frac{C}{|D(z,r)|}\int_{D(z,r)}\left | f(w)-B_\alpha f(z)\right |^pdV_\lambda(w),
\end{align*}
by Lemma \ref{chara}, if $\mathbf{MO}_\lambda f$ is finite, then $f\in BMO^p_r$.

Using $P_\alpha(\overline{f_z}h_z)(w)=B_\lambda f(z)h_z(w)$, we have
\begin{align*}
	\mathbf{MO}_\lambda f(z)&=\left \| fh_z-B_\lambda f(z)h_z\right \|_{L^p_\lambda(T_B)}\\
	&=\left \| fh_z-P_\alpha(\overline{f_z}h_z)\right \|_{L^p_\lambda(T_B)}\\
	&\leq \left \| fh_z-P_\alpha(fh_z)\right \|_{L^p_\lambda(T_B)}+\left \| P_\alpha(fh_z)-P_\alpha(\overline{f_z}h_z)\right \|_{L^p_\lambda(T_B)}\\
	&\leq \left \| H^{(\alpha)}_fh_z\right \|_{L^p_\lambda(T_B)}+\left \| P_\alpha\right \|_{L^p_\lambda(T_B)}\left \| \overline{f}h_z-f_zh_z\right \|_{L^p_\lambda(T_B)}\\
	&=\left \| H^{(\alpha)}_fh_z\right \|_{L^p_\lambda(T_B)}+\left \| P_\alpha\right \|_{L^p_\lambda(T_B)}\left \|\overline{f}h_z- P_\alpha(\overline{f}h_z)\right \|_{L^p_\lambda(T_B)}\\
	&=\left \| H^{(\alpha)}_fh_z\right \|_{L^p_\lambda(T_B)}+\left \| P_\alpha\right \|_{L^p_\lambda(T_B)}\left \| H^{(\alpha)}_{\overline{f}}h_z\right \|_{L^p_\lambda(T_B)}.
\end{align*}

So, if both $H^{(\alpha)}_f$ and $H^{(\alpha)}_{\overline{f}}$ are bounded on $A^p_\lambda(T_B)$, then $f\in BMO^p_\alpha$.

\end{proof}


Before we prove the second main result, Theorem B, it is necessary to introduce several key lemmas.

\begin{lemma}\label{lem:ptwsest}
	For any fixed $\alpha^{\prime}\in \mathbb{N}_0^{n-1}$, then we have
	\[
	\big|(z^{\prime}-w^{\prime})^{\alpha^{\prime}}\big| \lesssim |\bfrho(z,w)|^{\frac {|\alpha^{\prime}|}{2}},
	\]
	for all $z,w\in T_{B}$.
\end{lemma}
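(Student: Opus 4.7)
The plan is to reduce the multi-index statement to the scalar estimate $|z'-w'| \lesssim |\bfrho(z,w)|^{1/2}$, and then to prove the sharper bound $|z'-w'|^2 \le 4|\bfrho(z,w)|$ by exhibiting an explicit algebraic identity for $\mathrm{Re}\,\bfrho(z,w)$. Since $|z_j - w_j| \le |z'-w'|$ for each coordinate, the multiplicative form gives
\[
|(z'-w')^{\alpha'}| = \prod_{j=1}^{n-1}|z_j - w_j|^{\alpha'_j} \le |z'-w'|^{|\alpha'|},
\]
so everything reduces to a single estimate in the $|\alpha'|=1$ case.

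For that estimate I would write $z=x+iy$ and $w=u+iv$ with $y,v\in B$, and expand the defining formula
\[
4\bfrho(z,w) = (z'-\overline{w'})\cdot(z'-\overline{w'}) - 2i(z_n-\overline{w_n}).
\]
Coordinate by coordinate, $z_j-\overline{w_j} = (x_j-u_j) + i(y_j+v_j)$ and $z_n-\overline{w_n}=(x_n-u_n)+i(y_n+v_n)$. Taking real parts and using the two elementary identities
\[
2y_j^2 + 2v_j^2 - (y_j+v_j)^2 = (y_j-v_j)^2, \qquad 2(y_n+v_n) - 2|y'|^2 - 2|v'|^2 = 2\bfrho(z) + 2\bfrho(w),
\]
the cross terms and quadratic terms collect into
\[
\mathrm{Re}\bigl(4\bfrho(z,w)\bigr) = \sum_{j=1}^{n-1}\!\bigl[(x_j-u_j)^2 + (y_j-v_j)^2\bigr] + 2\bfrho(z) + 2\bfrho(w) = |z'-w'|^2 + 2\bfrho(z) + 2\bfrho(w).
\]
This is the key identity of the proof.

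Since $\bfrho(z),\bfrho(w) > 0$ on $T_B$, the identity immediately gives
\[
|z'-w'|^2 \;\le\; \mathrm{Re}\bigl(4\bfrho(z,w)\bigr) \;\le\; 4|\bfrho(z,w)|,
\]
hence $|z'-w'| \le 2|\bfrho(z,w)|^{1/2}$. Combining with the multi-index reduction yields the desired bound $|(z'-w')^{\alpha'}| \le 2^{|\alpha'|}|\bfrho(z,w)|^{|\alpha'|/2}$. There is no real obstacle here; the only subtlety is spotting the right grouping of terms so that the ``bad'' quantity $(y_j+v_j)^2$ splits into the good quantity $(y_j-v_j)^2$ plus pieces absorbed by $\bfrho(z)+\bfrho(w)$. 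Everything else is a direct expansion and the trivial bound $\mathrm{Re}\,\zeta \le |\zeta|$.
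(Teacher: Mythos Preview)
Your proof is correct, and in fact more direct than the paper's. The paper reduces to a special configuration via the automorphism $h_z(u)$, which sends $z$ to a point of the form $s\mathbf{i}$ and satisfies $\bfrho(h_z(u),h_z(v))=\bfrho(u,v)$; it then proves the scalar bound only in the case where one of the two points is $s\mathbf{i}$, and transports back. Your argument bypasses the automorphism entirely by computing $\mathrm{Re}\,4\bfrho(z,w)$ head-on and exhibiting the exact identity
\[
\mathrm{Re}\bigl(4\bfrho(z,w)\bigr)=|z'-w'|^2+2\bfrho(z)+2\bfrho(w),
\]
from which the estimate (with the explicit constant $2^{|\alpha'|}$) is immediate. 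This is genuinely simpler and, as a bonus, the same identity also recovers the inequality $2|\bfrho(z,w)|\ge\max\{\bfrho(z),\bfrho(w)\}$ stated separately in the paper. The paper's route has the conceptual advantage of exploiting the domain's symmetry, but for this particular lemma your elementary computation is cleaner.
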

\begin{proof}
	For each fixed $z\in T_B$, we define the following holomorphic  self-mapping of $T_{B}$:
	$$h_z\left( u \right) :=\left( u'-z',u_n-\text{Re}z_n-iu'\overline{z'}+\frac{i\left| z' \right|^2}{2}+\frac{i\overline{z'}\cdot \overline{z'}}{4}+\frac{iu'\cdot z'}{4} \right). $$
	All these mappings are holomorphic automorphisms of $T_{B}$. 
	In particular, we have
	\[
	h_z(z)= \bfrho(z) \mathbf{i},
	\]
	where $\mathbf{i}=(0^{\prime},i)$.
	Also, an easy calculation shows that
	\[
	\bfrho(h_z(u),h_z(v))=\bfrho(u,v).
	\]
	Note that
	\begin{equation*}\label{eqn:eleinq}
		4|\bfrho(u, s\mathbf{i})|=|{u^{\prime}}^2 -2iu_n+2s| \geq |u^{\prime}|^2, 
	\end{equation*}
	
	\begin{equation*}
		\begin{aligned}
			4|\bfrho(u, s\mathbf{i})|	\geqslant 4\left| \bfrho \left( u,\mathbf{i} \right) \right|\geqslant 4\text{Re}\bfrho \left( u,\mathbf{i} \right) &=\sum_{k=1}^{n-1}{\text{Re}u_{k}^{2}}+2\text{Im}u_n+2
			\\
			&\geqslant \sum_{k=1}^{n-1}{\text{Re}u_{k}^{2}}+2\sum_{k=1}^{n-1}{\left( \text{Im}u_k \right) ^2}+2
			\\
			&=\sum_{k=1}^{n-1}{\left( \text{Re}u_k \right) ^2}+\sum_{k=1}^{n-1}{\left( \text{Im}u_k \right) ^2}+2
			\\
			&=\sum_{k=1}^{n-1}{\left| u_k \right|^2}+2,
		\end{aligned}
	\end{equation*}
	for any $u\in T_{B}$ and any $s>0$.
	Taking $u=h_z(w)$ and $s=\bfrho(z)$ in the above inequality, and associating the previous argument, we get
	\[
	\big|(h_z(w))^{\prime}\big|^2 \lesssim \left|\bfrho(h_z(w), \bfrho(z) \mathbf{i})\right| = \left|\bfrho(h_z(w), h_z(z))\right|= |\bfrho(w,z)|.
	\]
	Consequently,
	\[
	\big|(z^{\prime}-w^{\prime})^{\alpha^{\prime}}\big| \lesssim \big|(z^{\prime}-w^{\prime})\big|^{|\alpha^{\prime}|}
	= \big|(h_z(w))^{\prime}\big|^{|\alpha^{\prime}|} \lesssim |\bfrho(z,w)|^{\frac {|\alpha^{\prime}|}{2}},
	\]
	as desired.
\end{proof}

\begin{lemma}\label{lem:nabla.Zhu 2.24a}
	For $f\in H(T_{B})$, then
	\[
	|f(z)-f(w)|\leq \sup_{u\in\gamma} |\widetilde{\nabla}f(u)| \beta(z,w)/\sqrt{2},
	\]
	where $\gamma$ is a geodesic joining $z$ to $w$ in the Bergman metric.
\end{lemma}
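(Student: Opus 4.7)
The plan is to integrate along the geodesic joining $z$ to $w$. Parametrize $\gamma \colon [0,1] \to T_B$ so that $\gamma(0) = z$ and $\gamma(1) = w$; because $f$ is holomorphic, the chain rule gives
\[
f(w) - f(z) = \int_0^1 \frac{d}{dt}(f\circ\gamma)(t)\,dt = \int_0^1 \sum_{j=1}^n \frac{\partial f}{\partial z_j}(\gamma(t))\,\gamma_j'(t)\,dt.
\]
Hence $|f(z)-f(w)|$ is bounded by the integral of the absolute value of the integrand, and the whole problem reduces to a pointwise estimate on $\bigl|\sum_j \partial_j f(u)\,v_j\bigr|$ in terms of $|\widetilde{\nabla} f(u)|$ and the infinitesimal Bergman length of $v$.

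The pointwise estimate is a Cauchy--Schwarz inequality for the dual pairing between the holomorphic cotangent and tangent bundles, using that $(b^{i,j})$ is the inverse of $(b_{i,j})$: for every $u\in T_B$ and every $a,v\in\mathbb{C}^n$,
\[
\Bigl|\sum_j a_j v_j\Bigr|^2 \leq \Bigl(\sum_{i,j} b^{i,j}(u)\,a_j\overline{a_i}\Bigr)\Bigl(\sum_{i,j} b_{i,j}(u)\,v_j\overline{v_i}\Bigr).
\]
Applied to $a_j = \partial_j f(u)$, and using the identity $|\widetilde{\nabla} f|^2 = 2\sum_{i,j} b^{i,j}\overline{\partial_i f}\,\partial_j f$ recorded in the preliminaries, the first factor equals $|\widetilde{\nabla} f(u)|^2/2$. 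Taking square roots with $u=\gamma(t)$ and $v=\gamma'(t)$ yields
\[
\Bigl|\sum_j \partial_j f(\gamma(t))\,\gamma_j'(t)\Bigr| \leq \frac{|\widetilde{\nabla} f(\gamma(t))|}{\sqrt{2}}\,\sqrt{\langle \mathbf{B}(\gamma(t))\gamma'(t),\gamma'(t)\rangle}.
\]

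Integrating this against $dt$ and pulling the sup of $|\widetilde{\nabla} f|$ along $\gamma$ outside, we obtain
\[
|f(w)-f(z)| \leq \frac{\sup_{u\in\gamma}|\widetilde{\nabla} f(u)|}{\sqrt{2}}\int_0^1 \sqrt{\langle \mathbf{B}(\gamma(t))\gamma'(t),\gamma'(t)\rangle}\,dt,
\]
and the remaining integral is the Bergman arc length of $\gamma$, which equals $\beta(z,w)$ because $\gamma$ is a geodesic realizing the distance. The only delicate point is the interpretation of the length functional $l(\gamma)$ as printed in the preliminaries, which omits the outer square root; the standard Riemannian arc length (with square root) is what is intended in both the definition of $\beta$ and in the bound above, and this is what makes $\beta(z,w)$ appear linearly on the right-hand side. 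With that reading, the argument is essentially a one-liner once the Cauchy--Schwarz step is in hand; no deeper obstacle arises.
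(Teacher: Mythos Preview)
Your argument is correct. The Cauchy--Schwarz step is sound here because the Bergman matrix $(b_{i,j})$ on $T_B$ has real entries, so it is real symmetric positive definite and the duality inequality $|a^Tv|^2\leq (a^*B^{-1}a)(v^*Bv)$ follows from the ordinary Cauchy--Schwarz after writing $B=C^2$; combined with the identity $|\widetilde{\nabla}f|^2=2\sum_{i,j}b^{i,j}\overline{\partial_i f}\,\partial_j f$ this gives exactly the pointwise bound you state. Your remark about the missing square root in the printed definition of $l(\gamma)$ is also on target.

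The paper, however, takes a different route: instead of an intrinsic integration-along-the-geodesic argument, it imports the analogous inequality on the unit ball from Timoney, and then pushes it over to $T_B$ via the Cayley transform $\Phi$, using that $\Phi$ carries geodesics to geodesics, the isometry $\beta(z,w)=\beta_{\mathbb{B}}(\Phi^{-1}(z),\Phi^{-1}(w))$, and the gradient relation $|\widetilde{\nabla}f(z)|=\sqrt{2}\,|\widetilde{\nabla}_{\mathbb{B}}(f\circ\Phi)(\Phi^{-1}(z))|$ established earlier. Your approach is more self-contained and would work on any K\"ahler manifold without reference to a model domain; the paper's approach is shorter in context because all the Cayley-transform identities have already been developed, and it makes explicit where the factor $1/\sqrt{2}$ comes from (namely the $\sqrt{2}$ discrepancy between the two invariant gradients).
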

\begin{proof}
	Let $\beta_{\mathbb{B}}(\cdot,\cdot)$ denote the Bergman metric of $\mathbb{B}$.
	From \cite{Tim80I} that 
	\[
	|h(\xi)-h(\eta)| \leq  \sup_{\zeta\in\Gamma} |\widetilde{\nabla}_{\mathbb{B}}h(\zeta)| \beta_{\mathbb{B}}(\xi,\eta)
	\]
	for $h\in H(\mathbb{B})$ and $\xi,\eta\in\mathbb{B}$, where $\Gamma$ is a geodesic joining $\xi$ to $\eta$ with respect to $\beta_{\mathbb{B}}(\cdot,\cdot)$.
	It is known that $\Phi$ sends the geodesic of $\mathbb{B}$ with respect to $\beta_{\mathbb{B}}(\cdot,\cdot)$ to the geodesic of $T_{B}$ with respect to $\beta(\cdot,\cdot)$, and from \cite[Proposition 1.4.15]{Kra01} ,we have the following 
	\begin{equation}\label{eq:Bergmanmetric}
		\beta(z,w)=\beta_{\mathbb{B}}(\Phi^{-1}(z),\Phi^{-1}(w))
	\end{equation}
	for any $z,w\in T_{B}$. Therefore, let $z=\Phi(\xi)$, $w=\Phi(\eta)$, $u=\Phi(\zeta)$ and $\gamma=\Phi(\Gamma)$ be the geodesic joining $z$ to $w$ with respect to $\beta(\cdot,\cdot)$, we have
	\begin{align*}
		|f(z)-f(w)|&=| (f\circ\Phi)(\xi)-(f\circ\Phi)(\eta)|\\
		&\leq \sup_{\zeta\in\Gamma} |\widetilde{\nabla}_{\mathbb{B}}(f\circ\Phi)(\zeta)| \beta_{\mathbb{B}}(\xi,\eta)\\
		&=\sup_{u\in\gamma} \frac{|\widetilde{\nabla}f(u)|}{\sqrt{2}} \beta(z,w),
	\end{align*}
	where the last equality uses \eqref{eq:gradient}, as desired.
\end{proof}

\begin{lemma}\label{lem:nabla.Zhu 2.24}
	Suppose $r>0$, $\alpha>-1$ and $p>0$. There exists a positive constant $C$ such that 
	\[
	|\widetilde{\nabla}f(z)|^p \leq \frac{C}{|D(z,r)|}\int_{D(z,r)} |f(w)|^p dV_\alpha(w)
	\]
	for all $f\in H(T_{B})$ and all $z\in T_{B}$.
\end{lemma}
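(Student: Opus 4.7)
The plan is to pull the estimate back to the unit ball $\mathbb{B}_n$ via the Cayley transform $\Phi$, invoke the analogous sub-mean value inequality for $|\widetilde{\nabla}_{\mathbb{B}}|^p$ on $\mathbb{B}_n$ (which is classical, see for instance Zhu's treatment of Bloch functions on the ball), and then transfer the resulting integral back to $T_B$ using the dictionary collected in Lemma~\ref{auto}.

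First, I set $g:=f\circ\Phi\in H(\mathbb{B}_n)$ and $\xi:=\Phi^{-1}(z)\in\mathbb{B}_n$. By \eqref{eq:gradient}, $|\widetilde{\nabla}f(z)|^p = 2^{p/2}|\widetilde{\nabla}_{\mathbb{B}}g(\xi)|^p$, and the classical unit-ball estimate gives
\[
|\widetilde{\nabla}_{\mathbb{B}}g(\xi)|^p \;\lesssim\; \frac{1}{(1-|\xi|^2)^{n+1+\alpha}}\int_{D_{\mathbb{B}}(\xi,r)}|g(\eta)|^p(1-|\eta|^2)^\alpha\,dV(\eta),
\]
where $D_{\mathbb{B}}(\xi,r)$ denotes the Bergman ball in $\mathbb{B}_n$ of center $\xi$ and radius $r$.

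Next, I change variables $w=\Phi(\eta)$. By \eqref{eq:Bergmanmetric} the image of $D_{\mathbb{B}}(\xi,r)$ is precisely $D(z,r)$, and parts (ii)--(iii) of Lemma~\ref{auto} yield
\[
1-|\eta|^2 = \frac{\bfrho(w)}{|\bfrho(w,\mathbf{i})|^{2}}, \qquad dV(\eta)=\frac{dV(w)}{4|\bfrho(w,\mathbf{i})|^{2(n+1)}},
\]
so that $(1-|\eta|^2)^\alpha\,dV(\eta)\simeq dV_\alpha(w)/|\bfrho(w,\mathbf{i})|^{2(n+1+\alpha)}$, while the outer prefactor becomes $|\bfrho(z,\mathbf{i})|^{2(n+1+\alpha)}/\bfrho(z)^{n+1+\alpha}$.

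Finally, since $w\in D(z,r)$ means $\beta(z,w)<r$, Lemma~\ref{equalivent} gives $|\bfrho(w,\mathbf{i})|\simeq|\bfrho(z,\mathbf{i})|$ (with constants depending only on $r$), so the two opposing powers of $|\bfrho(\cdot,\mathbf{i})|$ cancel and I am left with
\[
|\widetilde{\nabla}f(z)|^p\;\lesssim\;\frac{1}{\bfrho(z)^{n+1+\alpha}}\int_{D(z,r)}|f(w)|^p\,dV_\alpha(w)\;\simeq\;\frac{1}{|D(z,r)|}\int_{D(z,r)}|f(w)|^p\,dV_\alpha(w)
\]
after invoking Lemma~\ref{auto}(v). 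The main obstacle is purely bookkeeping of the three conspiring weight factors---the prefactor from the ball estimate, the real Jacobian of $\Phi^{-1}$ in \eqref{eqn:jacobian Phi^-1}, and the weight $(1-|\eta|^2)^\alpha$---but once the Lemma~\ref{auto} identities are substituted and Lemma~\ref{equalivent} is applied on $D(z,r)$, every factor cancels cleanly.
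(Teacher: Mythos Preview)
Your argument is correct, but it takes a different route from the paper's. The paper first proves the inequality only at the base point $\mathbf{i}$: it quotes the ball estimate $|\nabla g(0)|^p\le C\int_{D_{\mathbb{B}}(0,r)}|g|^p\,dV_\alpha$ at the origin, transfers it via $\Phi$ to get $|\widetilde{\nabla}f(\mathbf{i})|^p\le C\int_{D(\mathbf{i},r)}|f|^p\,dV_\alpha$, and then replaces $f$ by $f\circ\sigma_z^{-1}$, using the explicit Jacobian \eqref{eqn:jacobian} of the $T_B$-automorphism $\sigma_z$ together with \eqref{D(z,r)} to move the estimate to an arbitrary $z$. You instead invoke the ball estimate at a general point $\xi=\Phi^{-1}(z)$ and pull everything back through $\Phi$ in one step; the price is that you must track the three weight factors coming from $(1-|\xi|^2)^{-(n+1+\alpha)}$, the Jacobian \eqref{eqn:jacobian Phi^-1}, and $(1-|\eta|^2)^\alpha$, and then appeal to Lemma~\ref{equalivent} on $D(z,r)$ to cancel the opposing powers of $|\bfrho(\cdot,\mathbf{i})|$. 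Both approaches are sound: the paper's avoids the weight bookkeeping by exploiting the simple Jacobian of $\sigma_z$, while yours avoids introducing $\sigma_z$ altogether at the cost of one extra cancellation step.
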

\begin{proof}
	From \cite[Lemma 2.4]{Zhu05}, for $g\in H(\mathbb{B})$,  there exists a positive constant $C$ depending on $r$ such that 
	\[
	|\nabla g(0)|^p \leq C \int_{D_{\mathbb{B}}(0,r)} |g(\eta)|^p dV_\alpha(\eta).
	\]
	Then, for any $f\in H(T_{B})$, we have 
	\begin{align*}\label{eq:nabla.Zhu. 2.24}
		|\nabla(f\circ\Phi)(0)|^p &\leq C  \int_{D_{\mathbb{B}}(0,r)} |f\circ\Phi(\eta)|^p dV_\alpha(\eta)\\
		&\leq C \int_{D(\mathbf{i},r)}  \frac{|f(w)|^p}{|\bfrho(w,\mathbf{i})|^{2(n+1+\alpha)}} dV_\alpha(w)\\
		&\leq C \int_{D(\mathbf{i},r)} |f(w)|^p  dV_\alpha(w),
	\end{align*}
 the last inequality uses the inequality \eqref{essiential}, so $|\bfrho(w,\mathbf{i})|\geq 1/2$. By \eqref{eq:tildenablaf(i)}, we have
	\[
	|\widetilde{\nabla} f(\mathbf{i})|^p \leq C\int_{D(\mathbf{i},r)} |f(w)|^p  dV_\alpha(w).
	\]
 Replacing $f$ by $f\circ\sigma_z^{-1}$ in the above inequality, we obtain
	\[
	|\widetilde{\nabla}f(z)|^p
	\leq C \int_{D(\mathbf{i},r)} |f\circ\sigma_z^{-1} (w)|^p  dV_\alpha(w)
	= \frac{C}{|D(z,r)|} \int_{D(z,r)} |f(w)|^p  dV_\alpha(w),
	\]
	where the last equality uses \eqref{eqn:jacobian} and  \eqref{D(z,r)}. The proof of this lemma is completed.
\end{proof}

 We state two key lemmas, lemma 3.5 and lemma 3.6, which are crucial for proving {\bfseries Theorem B}. Using Lemma \ref{lem:nabla.Zhu 2.24} and Lemma \ref{lem:nabla.Zhu 2.24a}, as discussed above, these new lemmas can be proven straightforwardly using a similar approach to that in  \cite{Si}. Therefore, we omit the details.
 
\begin{lemma}\label{thm:BlochBMO}
	Suppose $r>0$, $p\geq 1$, $\alpha>-1$ and $f\in H(T_{B})$. Then the following conditions are equivalent:
	\begin{enumerate}
		\item[(a)] $f\in\mathcal{B}$.
		\item[(b)] There exists a positive constant $C$ such that
		\[
		\frac{1}{|D(z,r)|} \int_{D(z,r)} |f(w)-f(z)|^p dV_\alpha(w) \leq C 
		\]
		for all $z\in T_{B}$.
		\item[(c)] There exists a positive constant $C$ such that
		\[
		\frac{1}{|D(z,r)|} \int_{D(z,r)} |f(w)-\widehat{f}_r(z)|^p dV_\alpha(w) \leq C 
		\]
		for all $z\in T_{B}$.
		\item[(d)] There exists a positive constant $C$ with the property that for every $z\in T_{B}$ there is a complex number $c_z$ such that
		\[
		\frac{1}{|D(z,r)|} \int_{D(z,r)} |f(w)-\lambda_z|^p dV_\alpha(w) \leq C.
		\]
	\end{enumerate}
\end{lemma}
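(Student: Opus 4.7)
The plan is to prove the four conditions equivalent by the standard cycle $(a)\Rightarrow(b)\Rightarrow(c)\Rightarrow(d)\Rightarrow(a)$, invoking Lemma \ref{lem:nabla.Zhu 2.24a} for the forward direction and Lemma \ref{lem:nabla.Zhu 2.24} for the return. Throughout I will exploit that $\widetilde{\nabla}$ kills constants, so adding or subtracting any $\lambda_z$ does not alter $\widetilde{\nabla}f$; this is what makes the cycle close.

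For $(a)\Rightarrow(b)$: if $f\in\mathcal{B}$ and $w\in D(z,r)$, then by Lemma \ref{lem:nabla.Zhu 2.24a} applied along a geodesic in $D(z,r)$,
\[
|f(w)-f(z)|\leq \sup_{u\in T_B}|\widetilde{\nabla}f(u)|\,\beta(z,w)/\sqrt{2}\leq \|f\|_{\mathcal{B}}\,r/\sqrt{2}.
\]
Raising to the $p$-th power and averaging over $D(z,r)$ gives (b) with $C=(\|f\|_{\mathcal{B}}r/\sqrt{2})^p$. For $(b)\Rightarrow(c)$, use the triangle inequality $|f(w)-\widehat{f}_r(z)|\leq |f(w)-f(z)|+|f(z)-\widehat{f}_r(z)|$ and observe that
\[
|f(z)-\widehat{f}_r(z)|\leq \frac{1}{|D(z,r)|}\int_{D(z,r)}|f(u)-f(z)|\,dV_\alpha(u),
\]
which by Hölder is controlled by the constant in (b); averaging then yields (c). The implication $(c)\Rightarrow(d)$ is immediate: simply take $\lambda_z=\widehat{f}_r(z)$.

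The heart of the argument is $(d)\Rightarrow(a)$. For fixed $z\in T_B$, the function $g(w):=f(w)-\lambda_z$ lies in $H(T_B)$ and satisfies $\widetilde{\nabla}g=\widetilde{\nabla}f$ since $\lambda_z$ is a constant with respect to $w$. Therefore Lemma \ref{lem:nabla.Zhu 2.24} applied to $g$ gives
\[
|\widetilde{\nabla}f(z)|^p=|\widetilde{\nabla}g(z)|^p\leq \frac{C}{|D(z,r)|}\int_{D(z,r)}|g(w)|^p\,dV_\alpha(w)=\frac{C}{|D(z,r)|}\int_{D(z,r)}|f(w)-\lambda_z|^p\,dV_\alpha(w),
\]
and the right side is uniformly bounded in $z$ by hypothesis (d). Taking the supremum over $z$ yields $f\in\mathcal{B}$.

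I do not expect a genuine obstacle: the only subtle point is recognizing in the last step that $\lambda_z$ depends on $z$ but is constant as a function of the integration variable $w$, so $g$ is holomorphic and its invariant gradient coincides with that of $f$; once this is noted, Lemma \ref{lem:nabla.Zhu 2.24} is directly applicable without modification. The volume identity \eqref{D(z,r)} and the boundedness of $\beta$-diameter on $D(z,r)$ take care of all remaining bookkeeping.
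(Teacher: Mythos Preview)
Your proposal is correct and follows exactly the approach the paper indicates: the paper omits the details, stating only that the result follows ``straightforwardly'' from Lemmas \ref{lem:nabla.Zhu 2.24a} and \ref{lem:nabla.Zhu 2.24} by the method of \cite{Si}, and your cycle $(a)\Rightarrow(b)\Rightarrow(c)\Rightarrow(d)\Rightarrow(a)$ with Lemma \ref{lem:nabla.Zhu 2.24a} (or Corollary \ref{cor:Blochbeta}) driving the forward step and Lemma \ref{lem:nabla.Zhu 2.24} applied to $f-\lambda_z$ closing the loop is precisely that argument.
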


\begin{lemma}\label{thm:Bloch_0VMO}
	Suppose $r>0$, $p\geq 1$, $\alpha>-1$ and $f\in H(T_{B})$. Then the following conditions are equivalent:
	\begin{enumerate}
		\item[(a)] $f\in \mathcal{B}_0$.
		\item[(b)] There is 
		\[
		\frac{1}{|D(z,r)|} \int_{D(z,r)} |f(w)-f(z)|^p dV_\alpha(w) \to 0 \quad \text{as} \quad z\to\partial\widehat{T_{B}}.
		\]
		\item[(c)] There is
		\[
		\frac{1}{|D(z,r)|} \int_{D(z,r)} |f(w)-\widehat{f}_r(z)|^p dV_\alpha(w) \to 0 \quad \text{as} \quad z\to\partial\widehat{T_{B}}.
		\]
		\item[(d)]  For every $z\in T_{B}$ there is a complex number $c_z$ such that
		\[
		\frac{1}{|D(z,r)|} \int_{D(z,r)} |f(w)-c_z|^p dV_\alpha(w) \to 0 \quad \text{as} \quad z\to\partial\widehat{T_{B}}.
		\]
	\end{enumerate}
\end{lemma}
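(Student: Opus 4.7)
The plan is to mirror the argument for Lemma \ref{thm:BlochBMO}, replacing every ``$\leq C$'' by ``$\to 0$ as $z\to\partial\widehat{T_B}$''. Concretely, I would establish the cycle (a) $\Rightarrow$ (b) $\Rightarrow$ (c) $\Rightarrow$ (d) $\Rightarrow$ (a), using Lemmas \ref{lem:nabla.Zhu 2.24a} and \ref{lem:nabla.Zhu 2.24} as the two main analytic inputs, exactly as in the ``big'' case.

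For (a) $\Rightarrow$ (b), given $w\in D(z,r)$, let $\gamma$ be a Bergman geodesic joining $z$ and $w$; since $\beta(z,w)<r$, the geodesic is contained in $D(z,2r)$. Lemma \ref{lem:nabla.Zhu 2.24a} then yields
\[
|f(w)-f(z)|\leq \frac{r}{\sqrt{2}} \sup_{u\in D(z,2r)}|\widetilde{\nabla}f(u)|,
\]
so the integral mean in (b) is dominated by $C\,\sup_{u\in D(z,2r)}|\widetilde{\nabla}f(u)|^p$. The key observation is that $u\in D(z,2r)$ forces $u\to\partial\widehat{T_B}$ whenever $z\to\partial\widehat{T_B}$, because the Bergman metric of $T_B$ blows up both at $bT_B$ and at infinity, so a fixed-radius Bergman ball around $z$ is trapped near the boundary when $z$ is. Combined with $f\in\mathcal{B}_0$, the supremum tends to $0$, giving (b). The implication (b) $\Rightarrow$ (c) follows from the triangle inequality and Jensen's inequality, since
\[
|f(z)-\widehat{f}_r(z)|^p\leq \frac{1}{|D(z,r)|}\int_{D(z,r)}|f(w)-f(z)|^p\,dV_\alpha(w),
\]
so the integral mean in (c) is controlled by a multiple of the one in (b). The implication (c) $\Rightarrow$ (d) is immediate by taking $c_z=\widehat{f}_r(z)$.

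Finally, for (d) $\Rightarrow$ (a), the function $g=f-c_z$ is holomorphic and satisfies $\widetilde{\nabla}g=\widetilde{\nabla}f$, so Lemma \ref{lem:nabla.Zhu 2.24} applied to $g$ at the point $z$ gives
\[
|\widetilde{\nabla}f(z)|^p=|\widetilde{\nabla}g(z)|^p\leq \frac{C}{|D(z,r)|}\int_{D(z,r)}|f(w)-c_z|^p\,dV_\alpha(w),
\]
and the right side tends to $0$ as $z\to\partial\widehat{T_B}$ by hypothesis (d). Together with the continuity of $\widetilde{\nabla}f$ on $T_B$ (which is automatic for holomorphic $f$), this gives $\widetilde{\nabla}f\in C_0(\widehat{T_B})$, i.e., $f\in\mathcal{B}_0$.

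The only nontrivial step is the boundary-propagation argument used in (a) $\Rightarrow$ (b): one has to justify that the Bergman ball $D(z,2r)$ approaches $\partial\widehat{T_B}$ whenever $z$ does, so that the uniform smallness of $|\widetilde{\nabla}f|$ near the boundary can be transferred from the point $z$ to an entire metric ball around it. This follows from the explicit description of the Bergman metric on $T_B$ (via Lemma \ref{auto} and \eqref{eq:Bergmanmetric}), which shows that a Bergman-bounded neighborhood of a point near $bT_B\cup\{\infty\}$ remains near $bT_B\cup\{\infty\}$; once this is in hand, the rest of the proof is a routine adaptation of the scheme of Lemma \ref{thm:BlochBMO}, and we omit the repeated computations.
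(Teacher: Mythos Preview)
Your proof plan is correct and follows essentially the same approach as the paper's own (omitted) proof, which explicitly states that Lemma \ref{thm:Bloch_0VMO} is proved ``using Lemma \ref{lem:nabla.Zhu 2.24} and Lemma \ref{lem:nabla.Zhu 2.24a} \ldots\ using a similar approach to that in \cite{Si}'' --- precisely the two analytic inputs and the (a) $\Rightarrow$ (b) $\Rightarrow$ (c) $\Rightarrow$ (d) $\Rightarrow$ (a) scheme you outline. The boundary-propagation step you flag is indeed the only nontrivial additional ingredient beyond Lemma \ref{thm:BlochBMO}, and your justification via the comparison \eqref{eq:Bergmanmetric} with the ball metric is exactly what is needed.
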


And now recall that 
\[
MO_r(f)(z)= \left( \frac{1}{|D(z,r)|} \int_{D(z,r)} |f(w)-\widehat{f}_r(z)|^p dV_\alpha(w) \right)^{1/p},
\]
and
the space $BMO^p_r$ consists of those functions $f\in L^p_\alpha(T_{B})$ such that
\[
\|f\|_{BMO^p_r}= \sup\left\{MO_r(f)(z):z\in T_{B}\right\}.
\]

From Lemma \ref{thm:BlochBMO} and \ref{thm:Bloch_0VMO}, we conclude that the quantities $\|f\|_{\mathcal{B}}$ and $\|f\|_{BMO^p_r}$ are equivalent.
Therefore, {\bfseries Theorem B} is proved.

\begin{remark}
	The equivalences in Theorem A provide a foundational framework for understanding functions in $BMO^p_r$, which directly informs the characterization of the Bloch space in Theorem B. By identifying the Bloch space as the intersection of holomorphic functions and $BMO^p_r$, Theorem B highlights the significance of analytic properties established in Theorem A. Together, these results enhance our understanding of the interrelationships between these function classes and their applications in complex analysis.
\end{remark}

\section{Proofs of the Main Results: Theorems C and D}

To establish that the projection operator $\widetilde{P_\alpha}$ is bounded, we first introduce the integral operator  $T_{a,b,\gamma^{\prime}}$. We will then demonstrate that $T_{a,b,\gamma^{\prime}}$ is bounded on $L_s^p(T_{B})$.

	Given $a,b \in \mathbb{R}$, $\alpha>-1$ and $\gamma^{\prime}\in \mathbb{N}_{0}^{n-1}$, we define the integral operator $T_{a,b,\gamma^{\prime}}$
	by
	\[
	T_{a,b,\gamma^{\prime}} f(z) := \bfrho(z)^{a} \int\limits_{T_{B}} \frac {
		\big|(z^{\prime}-w^{\prime})^{\gamma^{\prime}}\big|  \bfrho (w)^{b} } {|\bfrho(z,w)|^{n+1+a+b+\frac {|\gamma^{\prime}|}{2}}} f(w) dV_\alpha(w),
	\quad z\in T_{B}.
	\]
	
Before we proceed with the proof of the theorem, it is important to highlight Lemma 4.1, which serves as a foundational tool in our analysis. We now state this lemma as a crucial step in our argument.
	\begin{lemma}\label{lem:IntOper}
		Suppose that $1\leq p\leq \infty$ and $s\in\mathbb{R}$. If $-pa<s+1<p(b+1)$, then $T_{a,b,\gamma^{\prime}}$ is bounded on $L_s^p(T_{B})$ .
	\end{lemma}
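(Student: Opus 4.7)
The plan is to apply Schur's test after reducing $T_{a,b,\gamma'}$ to a Forelli--Rudin type integral operator on $T_B$. Using the pointwise bound $|(z'-w')^{\gamma'}|\lesssim |\bfrho(z,w)|^{|\gamma'|/2}$ supplied by Lemma~\ref{lem:ptwsest}, the polynomial numerator can be absorbed against the corresponding power of $|\bfrho(z,w)|$ in the denominator, reducing matters to the $L^p_s$-boundedness of the Forelli--Rudin type operator with kernel
\[
\widetilde{K}(z,w) = \frac{\bfrho(z)^{a}\,\bfrho(w)^{b+\alpha}}{|\bfrho(z,w)|^{n+1+a+b}},
\]
where the extra factor $\bfrho(w)^{\alpha}$ coming from $dV_\alpha$ has been absorbed into the $w$-power so that only the balanced Forelli--Rudin exponent $c=n+1+a+b$ remains.

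For $1<p<\infty$, I would then apply Schur's test on $L^p_s(T_B)$ with the positive test function $h(z)=\bfrho(z)^{t}$ for a real parameter $t$ to be selected. Both Schur integrals
\[
\int_{T_B}\widetilde{K}(z,w)\,h(w)^{p'}\,dV_s(w) \quad\text{and}\quad \int_{T_B}\widetilde{K}(z,w)\,h(z)^{p}\,dV_s(z)
\]
can be computed explicitly via Lemma~\ref{eqn:keylem2}; each one reduces to a pure power of $\bfrho(z)$ or $\bfrho(w)$, whose matching against $h(z)^{p'}$ or $h(w)^{p}$ produces four strict linear inequalities on $t$ coming from the two integrability hypotheses ($T>-1$ and $S-T>n+1$) in Lemma~\ref{eqn:keylem2}. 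Using the identity $p+p'=pp'$, one verifies that this system of four inequalities admits a common solution $t$ precisely when $-pa<s+1<p(b+1)$. The endpoint cases $p=1$ and $p=\infty$ are treated more directly: at $p=\infty$ the bound reduces to the uniform integrability of $\widetilde{K}(z,\cdot)$ against the relevant weight via one application of Lemma~\ref{eqn:keylem2}, and at $p=1$ a Fubini--Tonelli exchange of integration order reduces the problem to a Lemma~\ref{eqn:keylem2} estimate in the $z$-variable.

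The main technical obstacle is the exponent bookkeeping in the Schur step: one must verify that the four inequalities on $t$ arising from Lemma~\ref{eqn:keylem2} intersect in a non-empty interval \emph{exactly} when the stated pair $-pa<s+1<p(b+1)$ holds, with the auxiliary consistency condition $a+b+1>0$ being automatic from the strict inequalities. This requires careful algebraic manipulation using $1/p+1/p'=1$ when comparing the two lower endpoints of the admissible range for $t$ against the two upper endpoints, to ensure no spurious constraint on $a$, $b$, or $\alpha$ is introduced.
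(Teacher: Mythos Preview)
Your reduction via Lemma~\ref{lem:ptwsest} to the case $\gamma'=0'$ is exactly what the paper does; the paper then simply cites \cite[Theorem~1]{LD} for the boundedness of $T_{a,b,0'}$, whereas you propose to supply the standard Schur-test proof of that cited result using Lemma~\ref{eqn:keylem2}. The two arguments are therefore essentially the same, yours being self-contained where the paper defers to the reference.
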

	
	\begin{proof}
		In the special case when $\gamma^{\prime}=0^{\prime}$,  this has been shown in \cite[Theorem 1]{LD}.
		To prove the general case,  by Lemma \ref{lem:ptwsest}, we have
		\[
		|T_{a,b,\gamma^{\prime}} f(z)| \lesssim T_{a,b,0^{\prime}} (|f|)(z).
		\]
		
	\end{proof}

We now proceed to prove our third result, {\bfseries Theorem C}.
	\begin{thm}\label{thm:main2}
		$\widetilde{P_\alpha}$ is a bounded projection from $L^{\infty }(T_{B})$ to $\widetilde{\mathcal{B}}$.
	\end{thm}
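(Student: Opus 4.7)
The plan is to verify that $\widetilde{P_\alpha}f\in\widetilde{\mathcal{B}}$ with $\|\widetilde{P_\alpha}f\|_\mathcal{B}\lesssim\|f\|_\infty$ for every $f\in L^\infty(T_B)$, and then address the projection (idempotence/surjectivity) property. Since $\widetilde{K_\alpha}(z,w)$ is holomorphic in $z$, differentiation under the integral immediately gives $\widetilde{P_\alpha}f\in H(T_B)$; and by construction $\widetilde{K_\alpha}(\mathbf{i},w)\equiv 0$, so $\widetilde{P_\alpha}f(\mathbf{i})=0$. Thus $\widetilde{P_\alpha}f$ is already holomorphic and vanishes at $\mathbf{i}$, and membership in $\widetilde{\mathcal{B}}$ reduces to a uniform bound on the invariant gradient.

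To estimate $|\widetilde{\nabla}(\widetilde{P_\alpha}f)(z)|$ I would use the explicit expression
\[
|\widetilde{\nabla} g(z)|^2 = 4\bfrho(z)\Bigl(2\bfrho(z)|\partial_{z_n} g(z)|^2 + \sum_{j=1}^{n-1}|\mathcal{L}_j g(z)|^2\Bigr),
\]
so it suffices to show $\bfrho(z)|\partial_{z_n}\widetilde{P_\alpha}f(z)|\lesssim\|f\|_\infty$ and $\bfrho(z)^{1/2}|\mathcal{L}_j\widetilde{P_\alpha}f(z)|\lesssim\|f\|_\infty$. Because $K_\alpha(\mathbf{i},w)$ is independent of $z$, every $z$-derivative of $\widetilde{K_\alpha}$ equals the corresponding derivative of $K_\alpha(z,w)$. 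A direct computation from $\partial_{z_n}\bfrho(z,w)=-i/2$ and $\partial_{z_j}\bfrho(z,w)=(z_j-\bar w_j)/2$ yields
\[
|\partial_{z_n} K_\alpha(z,w)|\lesssim|\bfrho(z,w)|^{-(n+2+\alpha)},\qquad |\mathcal{L}_j K_\alpha(z,w)|\lesssim \frac{|\overline{z_j-w_j}|}{|\bfrho(z,w)|^{n+2+\alpha}},
\]
after the algebraic simplification $(z_j-\bar w_j)/2-iy_j=\overline{z_j-w_j}/2$. Lemma \ref{lem:ptwsest} then gives $|\overline{z_j-w_j}|\lesssim|\bfrho(z,w)|^{1/2}$, so the second estimate becomes $\lesssim|\bfrho(z,w)|^{-(n+3/2+\alpha)}$. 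Applying Lemma \ref{eqn:keylem2} with $t=\alpha$ and $s\in\{n+2+\alpha,\,n+3/2+\alpha\}$ (both satisfying $s-t>n+1$) yields $\int_{T_B}\bfrho(w)^\alpha|\bfrho(z,w)|^{-s}\,dV(w)\lesssim\bfrho(z)^{-(s-\alpha-n-1)}$, equal to $\bfrho(z)^{-1}$ and $\bfrho(z)^{-1/2}$ respectively. Multiplying by $\bfrho(z)$ and $\bfrho(z)^{1/2}$ absorbs the weights and delivers the required uniform bounds; equivalently, these two kernel estimates fit the templates of $T_{a,b,\gamma'}$ in Lemma \ref{lem:IntOper} at the $p=\infty$ endpoint.

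The main obstacle is the projection part. Formally $\widetilde{P_\alpha}f(z) = P_\alpha f(z) - P_\alpha f(\mathbf{i})$ whenever both integrals converge, so for $f\in A^p_\lambda(T_B)\cap\widetilde{\mathcal{B}}$ with $p(\alpha+1)>\lambda+1$ the reproducing property gives $\widetilde{P_\alpha}f = f - f(\mathbf{i}) = f$, establishing idempotence on a dense subclass. To upgrade this to surjectivity of $\widetilde{P_\alpha}:L^\infty(T_B)\to\widetilde{\mathcal{B}}$, I would take $f\in\widetilde{\mathcal{B}}$ and construct a bounded symbol $g$ from its first-order derivatives — roughly $g(w)\sim \bfrho(w)\mathcal{L}_n f(w)$ together with analogous $\mathcal{L}_j$ contributions, all lying in $L^\infty$ precisely because $f\in\mathcal{B}$ — and then verify $\widetilde{P_\alpha}g=f$ by an $N=1$ integration-by-parts identity in the spirit of Theorem D. This last step is the principal technical difficulty: it requires Lemma \ref{lem:IntOper} in its full generality and careful control of the tubular-domain boundary behavior.
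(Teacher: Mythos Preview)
Your boundedness argument is essentially the paper's own proof: the paper also observes $\widetilde{P_\alpha}f(\mathbf{i})=0$, computes $\mathcal{L}_j(\widetilde{P_\alpha}f)$ and $\mathcal{L}_n(\widetilde{P_\alpha}f)$ explicitly (obtaining the same kernel $\frac{\overline{w}_j-\overline{z}_j}{\bfrho(z,w)^{n+2+\alpha}}$ you derived), multiplies by $\bfrho(z)^{1/2}$ and $\bfrho(z)$ respectively, and then invokes Lemma~\ref{lem:IntOper} (which packages Lemma~\ref{lem:ptwsest} and Lemma~\ref{eqn:keylem2} exactly as you describe) to conclude $\|\widetilde{P_\alpha}f\|_{\mathcal{B}}\lesssim\|f\|_\infty$.

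Where you diverge from the paper is in your treatment of the word ``projection.'' The paper's proof ends immediately after establishing the bounded-into-$\widetilde{\mathcal{B}}$ estimate; it does \emph{not} address idempotence or surjectivity at all. So your careful discussion of how to recover $f\in\widetilde{\mathcal{B}}$ from a bounded symbol via an $N=1$ integration-by-parts argument in the spirit of Theorem~D goes strictly beyond what the paper proves. Either the authors are using ``bounded projection'' loosely to mean ``bounded linear map,'' or their proof is incomplete on this point; in any case you have correctly identified the missing ingredient, and your sketch of how to fill it (build $g\in L^\infty$ from $\bfrho\mathcal{L}_n f$ and the $\mathcal{L}_j f$ and verify $\widetilde{P_\alpha}g=f$) is the standard route for this kind of statement.
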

	\begin{proof}
		Let $f\in L^{\infty }(T_{B})$. Obviously, $\widetilde{P_\alpha}f(\mathbf{i})=0$, 
		
		$$\mathcal{L}_j(\widetilde{P_\alpha}f)(z)=\frac{\varGamma \left( n+2+\alpha \right)}{2^{n+2}\pi ^n\varGamma (\alpha+1)}\int_{T_B}\frac{\overline{w}_j-\overline{z}_j}{\bfrho (z,w)^{n+2+\alpha}}f(w)dV_\alpha(w), $$
		for all $j\in \{1, 2, \cdots , n-1\}$.
		
		By a simple calculation, we can get
		\begin{equation*}
			\bfrho (z)^{\frac{1}{2}}\left | \mathcal{L}_j(\widetilde{P_\alpha}f)(z)\right |\leq \frac{\varGamma \left( n+2+\alpha \right)}{2^{n+2}\pi ^n\varGamma (\alpha+1)}\bfrho (z)^{\frac{1}{2}}\int_{T_B} \frac{\left |\overline{w}_j-\overline{z}_j\right |} {\left |\bfrho (z,w)\right |^{n+2+\alpha}}dV_\alpha(w)\cdot \left \| f\right \|_{\infty},
		\end{equation*}
		and
	\begin{equation*}
		\bfrho (z)\left | \mathcal{L}_n(\widetilde{P_\alpha}f)(z)\right |\leq \frac{\varGamma \left( n+2+\alpha \right)}{2^{n+2}\pi ^n\varGamma (\alpha+1)}\bfrho (z)\int_{T_B} \frac{1} {\left |\bfrho (z,w)\right |^{n+2+\alpha}}dV_\alpha(w)\cdot
		\left \| f\right \|_{\infty}.
	\end{equation*}
	By applying Lemma \ref{lem:IntOper}, we obtain $\left \| \widetilde{P_\alpha }f\right \|_\mathcal{B}\leq C\left \| f\right \|_{\infty }$, which completes the proof.
	\end{proof}

We now proceed to prove the last result, \textbf{Theorem D}. First, we introduce the following proposition, which establishes the boundedness of a crucial linear operator.
\begin{proposition}\label{prop:bddnsofder}
	Suppose $1\leq p < \infty$ , $t>-1$ , for any $\gamma\in \mathbb{N}_{0}^n$,  then the map $f\mapsto \bfrho^{\langle \gamma \rangle} \mathcal{L}^{\gamma} f$
	is a bounded linear operator from $A_t^p(T_{B})$ into $L_t^p(T_{B})$.
	In particular, for any $N\in \mathbb{N}_0$,
	the map $f\mapsto \mathcal{L}_n^N f$ is a bounded linear operator from $A_t^p(T_{B})$ into $A_{t+Np}^p(T_{B})$.
\end{proposition}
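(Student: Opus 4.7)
The plan is to exploit the reproducing identity of Lemma \ref{projection}, differentiate under the integral sign, and dominate the resulting integral pointwise by one of the operators $T_{a,b,\gamma^{\prime}}$ shown bounded in Lemma \ref{lem:IntOper}. With this in mind, I first choose $\alpha > -1$ satisfying $\alpha > (t+1)/p - 1$ when $1<p<\infty$ and $\alpha > t$ when $p = 1$. By Lemma \ref{projection} this guarantees $f = \mathcal{T}_\alpha f$ for every $f \in A_t^p(T_B)$, and the very same inequality for $\alpha$ will be seen to place the integral operator obtained below within the range of Lemma \ref{lem:IntOper}.

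The core calculation is the explicit action of $\mathcal{L}^\gamma$, taken in the variable $z$, on the kernel $\bfrho(z,w)^{-(n+1+\alpha)}$. A direct check gives $\mathcal{L}_j \bfrho(z,w) = \tfrac{1}{2}(\bar z_j - \bar w_j)$ for $j < n$ and $\mathcal{L}_n \bfrho(z,w) = -i/2$; moreover the operators $\mathcal{L}_1,\ldots,\mathcal{L}_n$ commute pairwise and annihilate each antiholomorphic factor $\bar z_k - \bar w_k$. Iterating via the Leibniz rule produces the closed form
\[
\mathcal{L}^\gamma \bfrho(z,w)^{-(n+1+\alpha)} = C(\alpha,\gamma)\, \prod_{j=1}^{n-1} (\bar z_j - \bar w_j)^{\gamma_j}\, \bfrho(z,w)^{-(n+1+\alpha+|\gamma|)}
\]
for an explicit nonzero constant $C(\alpha,\gamma)$, where differentiation under the integral sign is justified by the decay of the kernel and its derivatives together with the above choice of $\alpha$.

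Inserting this into $f = \mathcal{T}_\alpha f$ and taking absolute values, the quantity $\bfrho(z)^{\langle\gamma\rangle} |\mathcal{L}^\gamma f(z)|$, with $\langle\gamma\rangle := \gamma_n + |\gamma'|/2$, is controlled pointwise by $T_{a,b,\gamma'}(|f|)(z)$ for $a = \langle\gamma\rangle$ and $b = \alpha$; the kernel exponents match since $n+1+a+b+|\gamma'|/2 = n+1+\alpha+|\gamma|$. The condition $-pa < t+1 < p(b+1)$ of Lemma \ref{lem:IntOper} reduces to $-p\langle\gamma\rangle < t+1 < p(\alpha+1)$: the left inequality is automatic from $\langle\gamma\rangle \geq 0$ and $t>-1$, while the right is precisely our choice of $\alpha$. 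This yields the first assertion. The ``in particular'' statement is then the specialization $\gamma = (0',N)$, for which $|\gamma'|=0$ and $\langle\gamma\rangle = N$, so that $\bfrho^N \mathcal{L}_n^N f \in L_t^p(T_B)$ rewrites as $\mathcal{L}_n^N f \in L_{t+Np}^p(T_B)$; since $\mathcal{L}_n^N f = \partial^N f/\partial z_n^N$ is holomorphic whenever $f$ is, this places it in $A_{t+Np}^p(T_B)$. The main technical hurdle is the kernel-derivative identity displayed above: a priori the Leibniz expansion of $\mathcal{L}^\gamma$, a product of first-order operators mixing holomorphic derivatives with the nonholomorphic multipliers $2y_j$, could generate combinatorially many cross terms, and the clean closed form survives only because all $\mathcal{L}_j$'s commute and kill the antiholomorphic factors $\bar z_k - \bar w_k$.
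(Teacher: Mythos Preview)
Your proposal is correct and follows essentially the same approach as the paper: choose a reproducing exponent large enough, compute $\mathcal{L}^\gamma$ on the kernel to obtain the factor $(\bar z'-\bar w')^{\gamma'}\bfrho(z,w)^{-(n+1+\alpha+|\gamma|)}$, and dominate $\bfrho^{\langle\gamma\rangle}\mathcal{L}^\gamma f$ by $T_{\langle\gamma\rangle,\alpha,\gamma'}(|f|)$ so that Lemma~\ref{lem:IntOper} applies. Your write-up is in fact a bit more explicit than the paper's, spelling out the commutativity and the annihilation of the antiholomorphic factors that make the closed form survive, and handling the ``in particular'' clause separately.
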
	

\begin{proof}
	Let $\lambda$ be sufficiently large such that $p(\lambda+1)>t+1$. Then by Lemma \ref{projection}, we have
	\[
	f(z) = c_\lambda \int\limits_{T_{B}} \frac{\bfrho(w)^{\lambda}}{\bfrho(z,w)^{n+1+\lambda}} f(w)dV(w),
	\]
	where $c_\lambda=\frac{\varGamma \left( n+1+\lambda \right)}{2^{n+1}\pi ^n\varGamma (\lambda+1)}$.
	For any $\gamma\in \mathbb{N}_{0}^n$, 
	\begin{equation*}
		\begin{aligned}
			\mathcal{L}^{\gamma} \left\{\frac {1}{\bfrho(z,w)^{n+1+\lambda}} \right\}  &= C(n,\lambda,\gamma)
		\frac {\left(\frac{1}{2}({z}^{\prime}-\overline{{w}^{\prime}})-y'i\right)^{\gamma^{\prime}}}
		{\bfrho(z,w)^{n+1+\lambda+|\gamma|}}\\&=C(n,\lambda,\gamma)
		\frac{\left(\frac{1}{2}({z}^{\prime}-\overline{{z}^{\prime}}+\overline{{z}^{\prime}}-\overline{{w}^{\prime}})-y'i\right)^{\gamma^{\prime}}}{\bfrho(z,w)^{n+1+\lambda+|\gamma|}}\\
		&=C(n,\lambda,\alpha)
		\frac {\left(\overline{{z}^{\prime}}-\overline{{w}^{\prime}}\right)^{\gamma^{\prime}}}
		{\bfrho(z,w)^{n+1+\lambda+|\gamma|}}.
		\end{aligned}
	\end{equation*}
	Here, $C(n,\lambda,\gamma)$ is a constant depending on $n$, $\lambda$ and $\gamma$. Additionally, note that $|\gamma|=\langle \gamma \rangle+|\gamma^{\prime}|/2$.
	Therefore,
	\begin{align*}
		\big|\bfrho^{\langle \gamma \rangle} \mathcal{L}^{\gamma} f(z)\big|
		~\lesssim~& \bfrho(z)^{\langle \gamma \rangle}
		\int\limits_{T_{B}} \frac { \big|(z^{\prime}-w^{\prime})^{\gamma^{\prime}}\big| \bfrho(w)^{\lambda}}
		{|\bfrho(z,w)|^{n+1+\lambda+|\gamma|}} |f(w)| dV(w)\\
		~=~& T_{\langle \gamma \rangle, \lambda, \gamma^{\prime}}(|f|)(z).
	\end{align*}
The proposition now follows directly from Lemma \ref{lem:IntOper}.
\end{proof}

\begin{thm}\label{thm:1}
	Suppose that $1\leq p < \infty$, $\lambda>-1$ and $\alpha\in \mathbb{R}$ satisfy
	\[
	\begin{cases}
		\alpha > \frac {\lambda+1}{p}-1,& 1<p<\infty,\\
		\alpha \geq \lambda,& p=1.
	\end{cases}
	\]
	If $f\in A_\lambda^p(T_{B})$ then
	\begin{equation*}\label{eqn:intrepn1}
		f=  \frac{(2i)^N\Gamma(1+\alpha)}{\Gamma(1+\alpha+N)}\mathcal{T} _\alpha (\bfrho^N \mathcal{L}_n^N f)
	\end{equation*}
	for any $N\in\mathbb{N}_0$.
\end{thm}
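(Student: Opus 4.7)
The proof hinges on a single integration-by-parts computation on top of the reproducing formula of Lemma~\ref{projection} (which already settles the case $N=0$). For $N\geq 1$, the first step is to exploit the holomorphy of $f$ on $T_B$: the Cauchy--Riemann equation $\partial_{\overline{w_n}}f=0$ yields $\mathcal{L}_n f=\partial_{w_n}f=-i\,\partial_{y_n}f$, and hence $\mathcal{L}_n^N f=(-i)^N\partial_{y_n}^N f$. Writing
\[
\mathcal{T}_\alpha(\bfrho^N\mathcal{L}_n^N f)(z)=c_\alpha(-i)^N\int_{T_B}\frac{\bfrho(w)^{\alpha+N}}{\bfrho(z,w)^{n+1+\alpha}}\,\partial_{y_n}^N f(w)\,dV(w),
\]
with $c_\alpha=\Gamma(n+1+\alpha)/(2^{n+1}\pi^n\Gamma(1+\alpha))$, I would then integrate by parts $N$ times in $y_n$ on each slice $\{y_n>|y'|^2\}$, transferring the $y_n$-derivatives off $f$ and onto the weight--kernel product.

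Two elementary identities drive the second step: $\partial_{y_n}\bfrho(w)=1$ and $\partial_{y_n}\bfrho(z,w)=1/2$. Combined with the Leibniz rule, these give
\[
\partial_{y_n}^N\!\left[\frac{\bfrho(w)^{\alpha+N}}{\bfrho(z,w)^{n+1+\alpha}}\right]=\sum_{k=0}^N\binom{N}{k}\frac{(-1)^{N-k}}{2^{N-k}}\frac{\Gamma(\alpha+N+1)\,\Gamma(n+1+\alpha+N-k)}{\Gamma(\alpha+N-k+1)\,\Gamma(n+1+\alpha)}\frac{\bfrho(w)^{\alpha+N-k}}{\bfrho(z,w)^{n+1+\alpha+N-k}}.
\]
Now each integral $\int_{T_B}f(w)\,\bfrho(w)^{\alpha+N-k}/\bfrho(z,w)^{n+1+\alpha+N-k}\,dV(w)$ equals $c_{\alpha+N-k}^{-1}f(z)$ by a second application of Lemma~\ref{projection} (the shifted parameter $\alpha+N-k$ is still admissible since it only exceeds $\alpha$). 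After the $\Gamma$-factors cancel across $c_{\alpha+N-k}^{-1}$ and the Leibniz coefficients, the only remaining $k$-dependence is $\binom{N}{k}(-1/2)^{N-k}$, whose binomial sum equals $(1/2)^N$. This collapses everything into a single constant multiple of $f(z)$; bookkeeping the powers of $2$ and $i$ produces the claimed prefactor $\Gamma(1+\alpha+N)/[(2i)^N\Gamma(1+\alpha)]$ and completes the proof.

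The main obstacle is rigorously justifying the iterated integration by parts. At the bottom edge $y_n=|y'|^2$ every intermediate boundary term is proportional to $\bfrho(w)^{\alpha+N-j}$ with $0\leq j<N$, which vanishes because $\alpha>-1$, so these contributions are harmless. At $y_n=\infty$, however, one needs pointwise decay of $f$, which is not automatic for an arbitrary $f\in A_\lambda^p(T_B)$; I would handle this by first proving the identity on the dense subspace $S_t\cap A_\lambda^p(T_B)$ for $t$ sufficiently large (the same dense family already used in the proof of Theorem~A) where the decay is explicit, and then extending by continuity. The continuity step uses Proposition~\ref{prop:bddnsofder}, which guarantees that $f\mapsto\bfrho^N\mathcal{L}_n^N f$ is bounded from $A_\lambda^p(T_B)$ into $L_\lambda^p(T_B)$, together with the boundedness of $\mathcal{T}_\alpha$ on $L_\lambda^p(T_B)$ under our hypotheses. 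Absolute-convergence estimates needed for Fubini's theorem throughout the calculation can be controlled by the standard kernel estimates of Lemma~\ref{eqn:keylem2}.
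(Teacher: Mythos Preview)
Your integration-by-parts route is genuinely different from the paper's and, modulo the issue below, the computation is sound: the Leibniz expansion, the termwise use of Lemma~\ref{projection} with shifted parameter $\alpha+N-k\geq\alpha$, and the binomial collapse $\sum_k\binom{N}{k}(-1/2)^{N-k}=2^{-N}$ all check out. The paper, by contrast, never integrates by parts. It argues in two pieces. For the endpoint $p=1$, $\alpha=\lambda$ it represents $\mathcal{L}_n^N f$ via a reproducing integral with an auxiliary parameter $\gamma>\alpha$, substitutes into $\mathcal{T}_\alpha(\bfrho^N\mathcal{L}_n^N f)$, and evaluates the resulting double integral by Fubini together with the closed-form identity of Lemma~\ref{eqn:keylem2}. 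For all remaining $(p,\alpha,\lambda)$ it sets $g=f-c\,\mathcal{T}_\alpha(\bfrho^N\mathcal{L}_n^N f)$, computes $\mathcal{L}_n^N g=\mathcal{L}_n^N f-\mathcal{T}_{\alpha+N}(\mathcal{L}_n^N f)=0$ (the second operator reproduces $\mathcal{L}_n^N f\in A^p_{\lambda+Np}$ by Lemma~\ref{projection} with parameter $\alpha+N$), and then shows that any $g\in A^p_\lambda$ annihilated by $\mathcal{L}_n^N$ must vanish, by writing it as a polynomial in $z_n$ with holomorphic coefficients and invoking the divergence of $\int_{y_n>|y'|^2}\bfrho^{(N-1)p+\lambda}\,dy_n$. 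This uniqueness step is the paper's key idea and bypasses both boundary terms and any density argument.

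Your continuity step has a real gap at the endpoint $p=1$, $\alpha=\lambda$. You appeal to boundedness of $\mathcal{T}_\alpha$ on $L^p_\lambda$, but the Schur-type estimate behind it (Lemma~\ref{lem:IntOper} with $a=0$, $b=\alpha$) requires the strict inequality $\lambda+1<p(\alpha+1)$, which fails exactly there; this is precisely why the paper isolates that case. The fix is not hard---replace norm-boundedness by pointwise continuity: for each fixed $z$ the kernel satisfies $|\bfrho(z,w)|^{-(n+1+\alpha)}\leq(2/\bfrho(z))^{n+1+\alpha}$, so $g\mapsto\mathcal{T}_\alpha(\bfrho^N g)(z)$ is a bounded linear functional on $A^1_{\alpha+N}$, and combined with Proposition~\ref{prop:bddnsofder} this lets you pass from the dense class to all of $A^1_\alpha$ pointwise---but as written the density argument does not close at that endpoint.
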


To prove this theorem, we divide the proof into two separate lemmas, each addressing a distinct case. We begin by considering the case where $p=1$ and $\lambda=\alpha$.

\begin{lemma}\label{lem:Prhopar}
	Theorem \ref{thm:1} holds for $p=1$ with $\lambda=\alpha$.
\end{lemma}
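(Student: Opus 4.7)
The base case $N=0$ is immediate from Lemma \ref{projection} with $p=1$ and $\lambda=\alpha$ (the condition $\alpha \geq \lambda$ holds with equality), which gives $f = \mathcal{T}_\alpha f$. For $N \geq 1$, my plan is to start from this reproducing formula and transfer all derivatives onto $f$ via integration by parts. The algebraic backbone is the identity $\mathcal{L}_n \bfrho = -i/2$ (coming from $\bfrho(w) = y_n - |y'|^2$ and $\partial y_n/\partial w_n = 1/(2i)$), which, together with $\mathcal{L}_n \bfrho^{\alpha+1} = -\tfrac{i(\alpha+1)}{2}\bfrho^{\alpha}$ and an obvious induction, yields
\[
\bfrho(w)^{\alpha} \;=\; \frac{(2i)^N \Gamma(\alpha+1)}{\Gamma(\alpha+1+N)}\; \mathcal{L}_n^N \bfrho(w)^{\alpha+N}.
\]

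Substituting this expression for $\bfrho(w)^\alpha$ into $f(z) = c_\alpha \int_{T_B} \bfrho(w)^\alpha f(w) \bfrho(z,w)^{-(n+1+\alpha)} dV(w)$ and integrating by parts in $w_n$ exactly $N$ times does the job. Two observations make this clean. First, the Wirtinger integration-by-parts rule $\int u \,\mathcal{L}_n v \, dV = -\int (\mathcal{L}_n u)\, v\, dV$ (once boundary terms are handled) produces an overall sign $(-1)^N$. Second, the kernel $\bfrho(z,w)^{-(n+1+\alpha)}$ is anti-holomorphic in $w$, so $\mathcal{L}_n$ annihilates it and each derivative lands squarely on $f$; thus $\mathcal{L}_n^N\!\bigl(\bfrho(z,w)^{-(n+1+\alpha)} f(w)\bigr) = \bfrho(z,w)^{-(n+1+\alpha)} \mathcal{L}_n^N f(w)$. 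Combining these two facts recovers the stated identity
\[
f(z) \;=\; \frac{(2i)^N \Gamma(\alpha+1)}{\Gamma(\alpha+1+N)}\; \mathcal{T}_\alpha\!\bigl(\bfrho^N \mathcal{L}_n^N f\bigr)(z),
\]
with the precise constant emerging from careful bookkeeping of the $(2i)^N$ from the algebraic identity, the $(-1)^N$ from integration by parts, and the $c_\alpha$'s that combine to reconstruct $\mathcal{T}_\alpha$.

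The main obstacle will be justifying that the boundary contributions in the $N$-fold integration by parts actually vanish. The boundary piece at $\bfrho = 0$ is harmless because after $j$ integrations by parts the integrand carries a factor $\bfrho^{\alpha+j}$ with $\alpha+j \geq \alpha+1 > 0$, so it vanishes at the boundary of $T_B$. The piece at infinity is more delicate, since a general $f \in A^1_\alpha$ carries no pointwise decay. To finesse this I would first verify the identity on the dense subspace $S_t = \{g \in H(T_B):\sup_z |\bfrho(z,\mathbf{i})|^t |g(z)| < \infty\}$ (which is dense in $A^1_\alpha$ for $t$ large, exactly as used in the proof of Theorem A), where the decay $|g(w)| \lesssim |\bfrho(w,\mathbf{i})|^{-t}$ plus similar decay for $\mathcal{L}_n^k g$ makes every boundary contribution vanish and every integral absolutely convergent.

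Extension to general $f \in A^1_\alpha$ is then a density argument: Proposition \ref{prop:bddnsofder} (with $p=1$, $t=\alpha$) gives that $f \mapsto \bfrho^N \mathcal{L}_n^N f$ is bounded $A^1_\alpha \to L^1_\alpha$, while Lemma \ref{lem:IntOper} makes $\mathcal{T}_\alpha$ bounded on $L^1_\alpha$, so both sides of the claimed identity depend continuously on $f$ in the $A^1_\alpha$ norm. Since they agree on the dense subspace $S_t \cap A^1_\alpha$, they agree on all of $A^1_\alpha$, completing the proof.
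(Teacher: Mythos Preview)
Your approach is genuinely different from the paper's. The paper does not integrate by parts at all: it introduces an auxiliary weight $\gamma>\alpha$, uses Lemma~\ref{projection} at level $\gamma$ to write $f=\mathcal{T}_\gamma f$, differentiates under the integral to obtain an explicit formula for $\mathcal{L}_n^N f$ as an integral against $f$, substitutes this into $\mathcal{T}_\alpha(\bfrho^N\mathcal{L}_n^N f)$, and then applies Fubini together with the closed formula of Lemma~\ref{eqn:keylem2} to collapse the resulting double integral back to $f$. This sidesteps boundary terms and density arguments entirely; the only thing to check is absolute convergence for Fubini, and the extra room $\gamma>\alpha$ makes that immediate. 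Your route is conceptually direct but carries more analytic overhead.

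There is, however, one real gap. Your claim that Lemma~\ref{lem:IntOper} makes $\mathcal{T}_\alpha$ bounded on $L^1_\alpha$ is false: the Bergman-type projection fails at the $L^1$ endpoint. Concretely, the paper records in Section~2.1 that $P_\alpha$ is bounded on $L^p_\lambda$ if and only if $p(\alpha+1)>\lambda+1$, and at $p=1$, $\lambda=\alpha$ this is the empty inequality $\alpha+1>\alpha+1$. This endpoint failure is precisely why the case $p=1$, $\lambda=\alpha$ is singled out for separate treatment. So your norm-continuity closing step does not work as written. It is salvageable pointwise: for fixed $z$ the bound $|\bfrho(z,w)|\ge\bfrho(z)/2$ together with Proposition~\ref{prop:bddnsofder} gives
\[
\bigl|\mathcal{T}_\alpha(\bfrho^N\mathcal{L}_n^N f)(z)\bigr|\;\lesssim\;\bfrho(z)^{-(n+1+\alpha)}\,\|\mathcal{L}_n^N f\|_{A^1_{\alpha+N}}\;\lesssim\;\bfrho(z)^{-(n+1+\alpha)}\,\|f\|_{A^1_\alpha},
\]
so both sides of the identity are continuous in $f\in A^1_\alpha$ for each fixed $z$, which suffices to pass from the dense subspace to all of $A^1_\alpha$.
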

\begin{proof}
	Suppose  that $f\in A_{\alpha}^1(T_{B})$. Let $\gamma>\alpha$, by Lemma \ref{projection}, we have
	\[
	f(z)=c_{\gamma} \int\limits_{T_{B}} \frac {\bfrho(w)^{\gamma}}{\bfrho(z,w)^{n+1+\gamma}} f(w) dV(w)
	\]
	for all $z\in T_{B}$. Then it is easy to shows that
	\[
	\mathcal{L}_n^N f(z) = (n+1+\gamma)_{N} (-i/2)^N c_{\gamma}  \int\limits_{T_{B}} \frac {\bfrho(w)^{\gamma}}{\bfrho(z,w)^{n+1+\gamma+N}} f(w) dV(w),
	\]
	where $(n+1+\gamma)_{N}$ is the Pochhammer symbol denoted by $(a)_{k}=a(a+1)\cdots(a+k-1)$.
	Thus,
	\begin{align*}
	\mathcal{T} _\alpha (\bfrho^N \mathcal{L}_n^N f)(z)
		&= c_{\alpha} \int\limits_{T_{B}} \frac{\bfrho(w)^{\alpha+N} \mathcal{L}_n^N f(w)}{\bfrho(z,w)^{n+1+\alpha}} dV(w)\\
		&=  c_{\alpha} (n+1+\gamma)_{N} (-i/2)^N c_{\gamma} \\
		& \quad\times  \int\limits_{T_{B}} \frac{\bfrho(w)^{\alpha+N}}{\bfrho(z,w)^{n+1+\alpha}}
		\bigg(\int\limits_{T_{B}} \frac {\bfrho(u)^{\gamma}}{\bfrho(w,u)^{n+1+\gamma+N}} f(u) dV(u) \bigg) dV(w).
	\end{align*}
	By Fubini's theorem  and Lemma \ref{eqn:keylem2},  for any $z\in T_{B}$, the above double integral equals to
	\begin{align*}
		\int\limits_{T_{B}} & \bfrho(u)^{\gamma} f(u) \bigg(
		\int\limits_{T_{B}} \frac{\bfrho(w)^{\alpha+N}}{\bfrho(z,w)^{n+1+\alpha} \bfrho(w,u)^{n+1+\gamma+N}} dV(w)
		\bigg) dV(u)\\
		&=  C_1(n,n+1+\alpha,n+1+\gamma+N,\lambda+N)
		\int\limits_{T_{B}} \frac{\bfrho(u)^{\gamma} f(u)}{\bfrho(z,u)^{n+1+\gamma}} dV(u)\\
		&= C_1(n,n+1+\alpha,n+1+\gamma+N,\alpha+N) c_{\gamma}^{-1} f(z).
	\end{align*}
 Consequently,
	\[
\mathcal{T} _\alpha (\bfrho^N \mathcal{L}_n^N f)(z)=\frac{(-i/2)^N\Gamma(1+\alpha+N)}{\Gamma(1+\alpha)} f(z).
	\]
	
We now justify the use of Fubini's theorem. For any fixed $z\in T_{B}$, by Lemma \ref{eqn:keylem2} (note that $\gamma>\alpha$) and \eqref{essiential}, it follows that
	\begin{align*}
		&\int_{T_{B}} \bigg( \int_{T_{B}} \frac{\bfrho(w)^{\alpha+N}}{|\bfrho(z,w)|^{n+1+\alpha} |\bfrho(w,u)|^{n+1+\gamma+N}} dV(w) \bigg) \bfrho(u)^{\gamma} |f(u)| dV(u)\\
		&\leq (\bfrho(z)/2)^{-n-1-\alpha} \int_{T_{B}} \bigg( \int_{T_{B}} \frac{\bfrho(w)^{\alpha+N}}{|\bfrho(w,u)|^{n+1+\gamma+N}} dV(w) \bigg) \bfrho(u)^{\gamma} |f(u)| dV(u)\\
		&=  (\bfrho(z)/2)^{-n-1-\alpha} C_1(n,n+1+\gamma+N,\alpha+N)
		\int_{T_{B}} |f(u)| \bfrho(u)^{\alpha}dV(u) <\infty .
	\end{align*}
	The proof of the lemma is completed.
\end{proof}

	By applying  Lemma \ref{lem:Prhopar}, we can now address the remaining part of the proof.
	
	\begin{lemma}\label{lem:Prhopar}
		Theorem \ref{thm:1} holds for $1<p<\infty$ when $p(\alpha+1)>\lambda+1$, and for $p=1$ when  $\alpha>\lambda$.
	\end{lemma}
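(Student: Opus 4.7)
My plan is to reduce the general case to the already-handled case in Lemma~\ref{lem:Prhopar} (first version, with $p=1$ and $\lambda=\alpha$) via a density argument, exploiting the fact that the subspaces $S_t$ introduced earlier in the paper are dense in $A^p_\lambda(T_B)$ while simultaneously sitting inside $A^1_\alpha(T_B)$ for $t$ large enough.

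Concretely, I would first choose $t > n+1+\alpha$ (and $t>n+1/2$). For any $g\in S_t$, the estimate
\[
\int_{T_B} |g(u)|\,\bfrho(u)^{\alpha}\,dV(u) \;\lesssim\; \int_{T_B} \frac{\bfrho(u)^{\alpha}}{|\bfrho(u,\mathbf{i})|^{t}}\,dV(u)
\]
together with Lemma~\ref{eqn:keylem2} (applied with the special case $z=\mathbf{i}$) shows that $g\in A^1_{\alpha}(T_B)$. Hence by the previous lemma,
\[
g(z) \;=\; \frac{(2i)^N\Gamma(1+\alpha)}{\Gamma(1+\alpha+N)}\,\mathcal{T}_\alpha\!\bigl(\bfrho^N \mathcal{L}_n^N g\bigr)(z)
\]
holds pointwise on $T_B$ for every $g\in S_t$.

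Next, I would take an arbitrary $f\in A^p_\lambda(T_B)$ under the stated hypotheses on $p,\lambda,\alpha$, and approximate it by a sequence $f_k\in S_t$ with $f_k\to f$ in $A^p_\lambda$. The identity above applied to each $f_k$ can then be passed to the limit. On the left, since point evaluation at any $z\in T_B$ is a bounded linear functional on $A^p_\lambda(T_B)$, we have $f_k(z)\to f(z)$. On the right, I would chain three bounded operators: Proposition~\ref{prop:bddnsofder} gives that $\mathcal{L}_n^N:A^p_\lambda(T_B)\to A^p_{\lambda+Np}(T_B)$ is bounded, trivial multiplication gives $\bfrho^N\cdot :A^p_{\lambda+Np}(T_B)\to L^p_\lambda(T_B)$, and finally the hypothesis on $(\alpha,\lambda,p)$ is precisely what makes $\mathcal{T}_\alpha=P_\alpha$ bounded from $L^p_\lambda(T_B)$ onto $A^p_\lambda(T_B)$ (Lemma~\ref{projection}). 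Consequently $\mathcal{T}_\alpha(\bfrho^N\mathcal{L}_n^N f_k)\to \mathcal{T}_\alpha(\bfrho^N\mathcal{L}_n^N f)$ in $A^p_\lambda$, hence pointwise, and the identity passes to $f$.

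The main obstacle is the limit passage on the right-hand side, since it uses in an essential way that $\mathcal{T}_\alpha$ is bounded on $L^p_\lambda(T_B)$ under precisely the hypotheses of the theorem — this is why the range of $(\alpha,\lambda,p)$ matches exactly. A direct approach (mimicking Lemma~\ref{lem:Prhopar} with a Fubini argument using some parameter $\gamma>\alpha$) runs into the difficulty that for $f\in A^p_\lambda$ one cannot in general bound $\int_{T_B} |f(u)|\bfrho(u)^\alpha\,dV(u)$, because the Lebesgue measure on $T_B$ is infinite and straightforward Hölder estimates fail at infinity; this is the reason I prefer the density route, which outsources that difficulty to the known $L^p$ mapping properties of $\mathcal{T}_\alpha$ and $\mathcal{L}_n^N$.
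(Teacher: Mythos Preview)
Your density argument is correct and complete: the choice $t>n+1+\alpha$ places $S_t\subset A^1_\alpha(T_B)$ by Lemma~\ref{eqn:keylem2}, the identity then holds on $S_t$ by the case $p=1$, $\lambda=\alpha$, and the chain $f\mapsto \mathcal L_n^N f\mapsto \bfrho^N\mathcal L_n^N f\mapsto \mathcal T_\alpha(\bfrho^N\mathcal L_n^N f)$ is bounded $A^p_\lambda\to A^p_\lambda$ precisely under the stated hypotheses (Proposition~\ref{prop:bddnsofder} for the first step, the isometric identification $\|\bfrho^N h\|_{L^p_\lambda}=\|h\|_{L^p_{\lambda+Np}}$ for the second, and the boundedness of $P_\alpha=\mathcal T_\alpha$ on $L^p_\lambda$ for the third). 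One minor citation slip: Lemma~\ref{projection} is the reproducing formula, not the boundedness statement; the latter is recorded separately in the preliminaries.

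The paper takes a genuinely different route. It sets $g:=f-\frac{(2i)^N\Gamma(1+\alpha)}{\Gamma(1+\alpha+N)}\mathcal T_\alpha(\bfrho^N\mathcal L_n^N f)$, differentiates under the integral, and observes that $\mathcal L_n^N g=\mathcal L_n^N f-\mathcal T_{\alpha+N}(\mathcal L_n^N f)=0$ because $\mathcal L_n^N f\in A^p_{\lambda+Np}$ and $p(\alpha+N+1)>\lambda+Np+1$ always holds. Then a Liouville-type step: $\mathcal L_n^N g\equiv0$ forces $g$ to be a polynomial of degree $N-1$ in $z_n$, and since $\bfrho^{N-1}\mathcal L_n^{N-1}g\in L^p_\lambda$ while $\int_{y_n>|y'|^2}(y_n-|y'|^2)^{(N-1)p+\lambda}dy_n=\infty$, the leading coefficient vanishes; iterating gives $g\equiv0$. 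Your approach is more economical once the density of $S_t$ is granted, and it transparently explains why the exact range of $(\alpha,\lambda,p)$ appears. The paper's approach avoids invoking density of $S_t$ (which the paper cites externally) and instead extracts an independent uniqueness principle --- that a Bergman function killed by $\mathcal L_n^N$ must vanish --- which has interest in its own right.
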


\begin{proof}
	Put
	\[
	g:=f - \frac{(2i)^N\Gamma(1+\alpha)}{\Gamma(1+\alpha+N)}\mathcal{T} _\alpha  (\bfrho^N \mathcal{L}_n^N f).
	\]
	Note from Proposition \ref{prop:bddnsofder} that $\mathcal{L}_n^N f\in A_{\lambda+Np}^p(T_{B})$. It is not difficult to see that $\mathcal{T} _\alpha  (\bfrho^N \mathcal{L}_n^N f)$ belongs to $A_\lambda^p(T_{B})$ and so does $g$.
	A simple calculation shows that
\begin{equation}\label{key1}
	\begin{aligned}
	\mathcal{L}_n^N g(z) &= \mathcal{L}_n^N f(z) - c_{\alpha+N}
	\int_{T_{B}} \frac {\bfrho(w)^{\alpha+N}} {\bfrho(z,w)^{n+1+\alpha+N}} \mathcal{L}_n^N f (w) dV(w)\\
	&= \mathcal{L}_n^N f(z) - \mathcal{T} _{\alpha+N}(\mathcal{L}_n^N f )(z)=0 .
    \end{aligned}
\end{equation}
	By \cite{DK}, since $p(\alpha+N+1)>\lambda+Np+1$ for any case.
	
	Suppose $\mathcal{L}_n^N g\equiv 0$. Then $g$ has the form
	\[
	g(z)=g_{N-1}(z^{\prime}) z_n^{N-1}+g_{N-2}(z^{\prime}) z_n^{N-2}+\cdots+ g_0(z^{\prime})
	\]
	with $g_{N-1}, \cdots, g_0$ holomorphic functions of $z^{\prime}$.
	It follows that
	\[
	\mathcal{L}_n^{N-1} g(z) = (N-1)! g_{N-1}(z^{\prime}).
	\]
	All that remains is to show  that $g_{N-1}(z^{\prime})\equiv 0$. In view of Proposition \ref{prop:bddnsofder}, we know that $\bfrho^{N-1} \mathcal{L}_n^{N-1} g\in L_\lambda^p(T_{B})$. Thus,
	\begin{align*}
		\infty &> \int_{T_{B}}  \bfrho(z)^{(N-1)p+\lambda} \left| g_{N-1}(z^{\prime})\right|^p dV(z)\\
		&=
		\int_{\mathbb{R}^n}\int_{\mathbb{R}^{n-1}}\int_{y_n>y'^2}(y_n-y'^2)^{(N-1)p+\lambda}dy_n|g_{N-1}(x'+iy')|^pdy'dx .
	\end{align*}
	However, note that the inner integral diverges for any fixed  $y^{\prime}$, implying that $g_{N-1}(z^{\prime})\equiv 0$.
	Consequently, after a finite number of iterations, we conclude that  $g \equiv 0$, thus completing the proof.
\end{proof}
By combining the two previously discussed lemmas, we have successfully completed the proof of Theorem D.

\end{document}